\newtheorem{assumption}{Assumption}[section]
\newtheorem{definition}{Definition}[section]
\newtheorem{proposition}{Proposition}[section]
\newtheorem*{problem}{Problem}
\newtheorem{lemma}{Lemma}[section]
\newtheorem{example}{Example}[section]
\newtheorem{experiment}{Experiment}[section]
\newtheorem{theorem}{Theorem}[section]
\newtheorem{corollary}{Corollary}[section]
\newtheorem{remark}{Remark}[section]
\newenvironment{proofth3.1}{{\noindent\bf Proof of Theorem \ref{mmvPC.theorem.Qcharacterization}}\quad}{\hfill $\square$\par}
\newenvironment{proofth4.1}{{\noindent\bf Proof of Theorem \ref{mmvPC.theorem.valuefunction}}\quad}{\hfill $\square$\par}
\newenvironment{proofth4.2}{{\noindent\bf Proof of Theorem \ref{mmvPC.theorem.verification}}\quad}{\hfill $\square$\par}
\newenvironment{proofth4.3}{{\noindent\bf Proof of Theorem \ref{mmvPC.theorem.efficientfrontier}}\quad}{\hfill $\square$\par}
\begin{document}
\title{\bf Optimal investment and reinsurance policies for the Cram{\'e}r-Lundberg risk model under monotone mean-variance preference}


\author{
	\renewcommand{\thefootnote}{\arabic{footnote}}
	Bohan Li\footnotemark[1]
	\and
	\renewcommand{\thefootnote}{\arabic{footnote}}
	Junyi Guo\footnotemark[2] $^*$
	\and
	\renewcommand{\thefootnote}{\arabic{footnote}}
	Linlin Tian\footnotemark[3]
}
\date{}

\renewcommand{\thefootnote}{\fnsymbol{footnote}}
\footnotetext{Bohan Li}
\footnotetext{bohanli@cuhk.edu.hk}
\footnotetext{}
\footnotetext{Junyi Guo}
\footnotetext{jyguo@nankai.edu.cn}
\footnotetext{*corresponding author}
\footnotetext{}
\footnotetext{Linlin Tian}
\footnotetext{linlin.tian@dhu.edu.cn}
\footnotetext{}

\renewcommand{\thefootnote}{\arabic{footnote}}
\footnotetext[1]{Department of Statistics, Chinese University of Hong Kong, Shatin, N.T., Hong Kong SAR.}
\footnotetext[2]{School of Mathematical Sciences, Nankai University, Tianjin 300071, People's Republic of China.}
\footnotetext[3]{College of Science, Donghua University, Shanghai 201620, People's Republic of China.}

\maketitle

\begin{abstract}
\noindent In this paper, an optimization problem for the monotone mean-variance(MMV) criterion is considered in the perspective of the insurance company. The MMV criterion is an amended version of the classical mean-variance(MV) criterion which guarantees the monotonicity of the utility function. With this criterion we study the optimal investment and reinsurance problem which is formulated as a zero-sum game between the insurance company and an imaginary player. We apply the dynamic programming principle to obtain the corresponding Hamilton-Jacobi-Bellman-Isaacs(HJBI) equation. As the main conclusion of this paper, by solving the HJBI equation explicitly, the closed forms of the optimal strategy and the value function are obtained. Moreover, the MMV efficient frontier is also provided. At the end of the paper, a numerical example is presented.
\end{abstract}
\vfill

JEL classification:
C61
G11
G22


\noindent
{\bf Keywords.} \rm Optimal reinsurance $\cdot$ Monotone mean-variance preference $\cdot$ Zero-sum game $\cdot$ Hamilton-Jacobi-Bellman-Isaacs equation $\cdot$ Monotone efficient frontier


\noindent


\section{Introduction}

\par In the past two decades, the mean-variance(MV) optimization problem has attracted considerable attention in the financial mathematics community, especially for the terminal wealth of an investor. The MV optimization problem, first proposed by \cite{markowitz1952portfolio}, is a multi-objective optimization problem:
\begin{equation}
\label{mmvPC.intro.multi.mv}
\text{Maximize} \bigg(-\mathbb{V}ar^PX^a,\mathbb{E}^PX^a\bigg), \quad s.t. \quad a \in \mathcal{A},
\end{equation}
where $\mathbb{V}ar^PX^a$ and $\mathbb{E}^PX^a$ are the variance and mean of the uncertain prospect $X^a$ controlled by the strategy $a$ under the probability measure $P$, and $\mathcal{A}$ is the set of admissible strategy. The goal of this problem is to find the maxima in the sense of Pareto optimality which together form the so-called efficient frontier in \cite{markowitz1952portfolio}. The strategies for reaching these maxima are the efficient strategies. Basically, the Pareto optimality means there is no other strategy which can make either $-\mathbb{V}ar^PX^a$ or $\mathbb{E}^PX^a$ better off without the other one being worse off.
\par In 2000, \cite{li2000optimal} extended the MV criterion to the multi-period setting using the idea of embedding the problem in a tractable auxiliary problem. From then on, the MV criterion became a popular objective in the field of multi-period optimization. For example, \cite{zhou2000continuous} studied the MV optimization problem in the continuous-time market. They introduced the stochastic linear-quadratic approach and obtained the efficient strategies as well as the efficient frontier by solving a stochastic Riccati equation. 
\par An application of the MV problem to the decision making of an insurance company is introduced by \cite{bauerle2005benchmark}. In this paper, the following criterion was used
\begin{equation*}
	\text{Minimize} \quad \mathbb{E}^P[X^a-\xi]^2,  \quad s.t. \quad a \in \mathcal{A}.
\end{equation*}
This criterion aims to make the insurance company's reserve at the terminal time remain close to a certain preassigned benchmark $\xi$. If we let $\xi$ equal $\mathbb{E}^PX^a$, the criterion with the benchmark becomes:
\begin{equation}
\label{mmvPC.intro.bench.mv}
\begin{cases}
&\text{Minimize} \quad \mathbb{E}^P[X^a-\xi]^2, \\
&\mathbb{E}^PX^a= \xi \quad s.t. \quad a \in \mathcal{A}.
\end{cases}
\end{equation}
Since the MV problem using criterion \eqref{mmvPC.intro.bench.mv} is proved to be equivalent to the multi-objective one \eqref{mmvPC.intro.multi.mv}, \cite{li2002dynamic} made use of criterion \eqref{mmvPC.intro.bench.mv} to solve the continuous time MV problem. Criterion \eqref{mmvPC.intro.bench.mv} simplifies the multi-objective MV problem to a state constrained optimization problem which allows us to apply the dynamic programming approach (studying the Hamilton-Jacobi-Bellman(HJB) equation). In particular, in the paper \cite{li2002dynamic}, the short-selling of stocks is prohibited which means the problem has no classical solution. Fortunately, they coped with this difficulty by finding the viscosity solution. If we let $\xi$ vary in an appropriate interval, the set of points $(\mathbb{E}^P[X^a-\xi]^2,\xi)$ is exactly the efficient frontier. In addition, \cite{bai2008optimal} investigated the no-shorting constrained MV problem where there are multiple risky assets. Also many other interesting aspects of MV problem are studied by some recent papers. \cite{bielecki2005continuous} considered the MV problem with bankruptcy prohibition, i.e., the state cannot be negative. To solve the problem, they decomposed it into two sub-problems and solved them separately. \cite{chen2013optimal} studied the problem via the maximum principle approach for a regime-switching market. Also by maximum principle approach, \cite{shen2014optimal} considered the problem when there is delayed capital inflow/outflow for the wealth process. \cite{shen2014mean} and \cite{sun2018optimal} studied the MV problem where the price processes of the stocks follow the constant elasticity of variance (CEV) model and the Heston stochastic volatility model, respectively. They obtained the explicit solutions by the Backwards Stochastic Differential Equation (BSDE) approach. {\color{magenta} There is also much literature considering the time-inconsistency of the MV problem, see \cite{landriault2018equilibrium}, \cite{ni2019equilibrium}, \cite{chen2021optimal}. Some other researchers incorporate the mean field type control theory to discuss the MV problem, see \cite{mei2020closed} and \cite{bensoussan2022dynamic}.}

\par By the method of Lagrange multipliers, criterion \eqref{mmvPC.intro.bench.mv} can be formulated as an unconstrained utility optimization problem given by
\begin{equation*}
\text{Maximize} \quad \mathsf{U}_{\xi}(X^a) =  - \mathbb{E}^P[X^a-\xi]^2 + l(\mathbb{E}^PX^a - \xi),
\end{equation*}
where $l \in \mathbb{R}$ and $\mathsf{U}_{\xi}$ is a utility function of the uncertain prospect $X^a$.

\par Here we introduce a strategically equivalent utility function called the penalty MV utility function (or MV preference in economics literature) as follows
\begin{equation}
	\label{mmvPC.intro.pref.mv}
	U_{\theta}(X^a) = \mathbb{E}^PX^a - \frac{\theta}{2} \mathbb{V}ar^PX^a,
\end{equation}
where $\theta$ is an index which measures the investor's aversion to risk. Let $\theta = 0$ then the utility becomes
\begin{equation}
	\label{mmvPC.intro.pref.mv.theta0}
	U_{0}(X^a) = \mathbb{E}^PX^a,
\end{equation}
which means that the investor wants to maximize the expected return regardless of risk. For large $\theta$, the original problem is approximately equivalent to maximizing
\begin{equation}
	\label{mmvPC.intro.pref.mv.theta8}
	U_{\infty}(X^a) = -\mathbb{V}ar^PX^a,
\end{equation}
which means the investor is absolutely risk averse and it wants to minimize the risk regardless of the expected return.

\par Although the MV preference \eqref{mmvPC.intro.pref.mv} is successful in the field of finance and economics due to the intuitive meaning and tractability, it has an inevitable and crucial weakness in that it may fail to be monotone. This is irrational in the economic context, since it may happen that $X < Y$ but $U_{\theta}(X) > U_{\theta}(Y)$. In practice, if an investor uses the MV preference as its objective to optimize the wealth, it may happen that the optimal strategy doesn't actually optimize the absolute amount of the underlying wealth. To overcome this shortcoming, \cite{maccheroni2009portfolio} introduced an amended version of the MV preference called the monotone mean-variance(MMV) preference (utility function) which is exactly monotonic with respect to the uncertain prospect $X$. Specifically, the MMV preference is given by
\begin{equation}
	\label{mmvPC.intro.pref.mmv}
	V_{\theta}(X^a) = \min_Q \bigg\{ \mathbb{E}^QX^a+\frac{1}{2 \theta} C(Q||P) \bigg\},
\end{equation}
where $Q$ ranges over all the absolutely continuous probability measures whose densities are square-integrable with respect to $P$, and $C(Q||P)$ is the relative Gini concentration index. The parameter $\theta$ in \eqref{mmvPC.intro.pref.mmv} is the same as that in \eqref{mmvPC.intro.pref.mv}.
For a similar objective function, readers are referred to \cite{hansen2006robust} and \cite{Elliott2009Portfolio}.

\par It is proved that the MMV preference $V_{\theta}$ in \eqref{mmvPC.intro.pref.mmv} is the minimal monotone modification of the MV preference $U_{\theta}$. Moreover, $V_{\theta}$ agrees with $U_{\theta}$ in the area where $U_{\theta}$ is monotonic. It follows from Lemma 2.1 of \cite{maccheroni2009portfolio} that the domain of monotonicity of $U_{\theta}$ consists of $X$ that is dominated by $\mathbb{E}^P[X] + \frac{1}{\theta}$. This is too constricted when $\theta$ is not small enough, and most of the meaningful variables in finance and economics fail to meet this condition. If we allow $\theta$ to vary in $(0,\infty)$ so as to get the efficient frontier, the domain of monotonicity becomes $\{X: X \leq \mathbb{E}^P[X]\}$, which is especially constricted. In this paper, the uncertain prospect $X$ is the terminal wealth of the insurance company who invests in the capital market and purchases reinsurance. In this case, $X$ may not satisfy this condition under some strategies. For this reason, we use the MMV preference instead of the classical one as the criterion to optimize the insurance company's terminal wealth. We obtain in this paper the optimal strategies $a^{\theta}$ for different parameters $\theta$. Denote $(-\mathbb{V}ar^PX^{a^{\theta}},\mathbb{E}^PX^{a^{\theta}})_{\theta \in \Theta}$ as the efficient frontier of the optimal MMV problem. A very interesting result is that the efficient frontier of MMV optimization problem is the same as that of the classical MV problem.

\par In the literature, there are few articles about the optimization problem of the MMV criterion. \cite{Trybu2014Continuous} first investigate it in the framework of continuous time financial market and it is assumed that the interest rate is a random process driven by a Brownian motion. In \cite{trybula2019continuous-time}, a continuous time portfolio choice problem is studied where the authors allow the coefficients of the stock prices process to be stochastic. They assume that $Q$ is an equivalent probability measure with respect to $P$. The optimal portfolio and the value function are obtained when the coefficients are specified. For a large class of portfolio choice problem, \cite{strub2020note} further prove that, when the risk assets are continuous semimartingale, the optimal portfolio and the value function of the classical MV preference and the MMV preference coincide. 
{\color{red}
\par We list the contributions of this paper as follows:
\begin{itemize}
	\item Although there is much literature in recent years for the MMV optimization problem, there is no paper studying the present problem involving jump processes. In this paper, the stock price process follows the Black-Scholes model, and the insurance surplus follows the Cram{\'e}r-Lundberg model where the aggregate claims process is a jump process.
	\item In contrast to \cite{Trybu2014Continuous} and \cite{trybula2019continuous-time}, where $Q$ is assumed to be chosen in the set of equivalent probability measure of $P$, in this paper, we allow $Q$ to be absolutely continuous with respect to $P$, which is more consistent with the original definition of the MMV preference in \cite{maccheroni2009portfolio}. It is difficult to consider the absolutely continuous probabilities. Since the Radon-Nikodym derivative $\frac{dQ}{dP}$ is not necessarily positive, letting $\mathcal{F}_t$ be the natural filtration, we can not simply write $\frac{dQ}{dP}\big|_{\mathcal{F}_t}$ as an exponential martingale( which is always positive). However, we managed to resolve this problem for the continuous state process case in our former work \cite{li2021optimal}. And in the present jump-involving framework, the problem can be more difficult to tackle with. In the later context, we can show that the minimum point is reached at an equivalent probability measure $Q^*$.  
	\item It is the first time that the MMV preference is incorporated into the optimal investment and reinsurance problem for the Cram{\'e}r-Lundberg risk model. The objective is to find the optimal portfolio and optimal retention level to maximize the MMV criterion of the terminal wealth of the insurance company. The explicit solutions are obtained.
\end{itemize}
}

\par The paper is organized as follows. In Section 2, we present the definition of the MMV preference proposed by \cite{maccheroni2009portfolio} and then introduce the insurance risk model in the financial market. In Section 3, we give an auxiliary two-player zero-sum stochastic differential game(SDG). This SDG can be solved by applying the dynamic programming principle and solving the corresponding Hamilton-Jacobi-Bellman-Isaacs(HJBI) equation. The original problem is then resolved consequently. Section 4 provides the value function and the optimal strategy explicitly. The efficient frontier for MMV is presented in Section 5. In Section 6, an example is presented to show the monotonicity of the MMV criterion. Besides, a simulation of the optimal strategies in the financial and insurance market is conducted.

\section{Model setup}

\subsection{Monotone mean-variance criterion}

\par The following MMV preference introduced by \cite{maccheroni2009portfolio} is used as the objective function,
\begin{equation}
\label{mmvPC.obj.0} V_{\theta}(X) = \min_{Q \in \Delta^2(P)} \bigg\{ \mathbb{E}^Q[X]+\frac{1}{2 \theta} C(Q||P) \bigg\}, \quad \forall X \in \mathbb{L}^2(P),
\end{equation}
where
\begin{equation*}
\Delta^2(P) = \bigg\{Q \ll P: Q( \Omega ) = 1, \mathbb{E}^P \bigg[\left(\frac{dQ}{dP}\right)^2\bigg] < \infty \bigg\},
\end{equation*}
and $C(Q||P)$ is defined as
\begin{equation*}
C(Q||P) = \mathbb{E}^P \bigg[\left(\frac{dQ}{dP}\right)^2 \bigg] - 1,
\end{equation*}
which is called the relative Gini concentration index (or $\chi ^2$-distance) and enjoys properties similar to those of the relative entropy (see \cite{liese2007convex}). Here $\theta$ is an index measuring the risk aversion of the insurance company.

\begin{problem}{\bf (MMV$_{\theta}$)}
	\begin{equation*}
	\text{\rm Maximize} \quad I_{\theta}(a) \quad \text{\rm over} \quad \mathcal{A},
	\end{equation*}
	where
	\begin{equation}
	\label{mmvPC.obj.1} I_{\theta}(a): = V_{\theta}(X^a) = \inf_{Q \in \Delta^2(P)} \left \{ \mathbb{E}^Q[X^a]+\frac{1}{2 \theta} C(Q||P) \right \},
	\end{equation}
	$a$ is the control process of $X$ and $\mathcal{A}$ is the admissible set.
\end{problem}

\par This is a max-min optimization problem, and is naturally relative to a zero-sum game as follows.

\begin{problem}{\bf (G)} \label{mmvPC.problem.G}
	\par Let
	\begin{equation*}
	J_{\theta}(a,Q): =  \left \{ \mathbb{E}^Q[X^a]+\frac{1}{2 \theta} C(Q||P) \right \}  .
	\end{equation*}
	The player one wants to maximize $J_{\theta}(a,Q)$ with its strategy $a$ over $\mathcal{A}[0,T]$ and the player two wants to maximize $-J_{\theta}(a,Q)$ with its strategy $Q$ over $\Delta^2(P)$.
\end{problem}

\par To explain Problem {\bf (G)} in the sense of economics, we regard it as a game between the market participant (in our paper, the insurance company) and the market (an imaginary player). $a$ is the strategy of the insurance company, $Q$ is the strategy of the market. Except for the insurance company, there are many other market participants (or competitors of the insurance company) in the market. Once any new opportunity for profits appears, it will be exploited by the competitors immediately, which may cause the market probability measure to deviate from the real-world probability measure $P$ to some new absolutely continuous probability measure $Q$. For this reason, $C(Q||P)$ describes the deviations of the probability measure and the cost of market's strategy (competitors' strategy). This explains that the market is to minimize $\mathbb{E}^Q[X]$ with the cost of $C(Q||P)$.

\par The parameter $\theta$ here measures the difficulty for the market to take a strategy. If $\theta = 0$, the market is extremely steady such that nobody can change the probability measure of the market, in other words, the competitors cannot grab opportunities but leave them there. Hence $Q \equiv P$ and Problem {\bf (G)} degenerates to \eqref{mmvPC.intro.pref.mv.theta0}. If $\theta = \infty$, there is no cost to change the probability measure of the market. In this case, whatever the insurance company's strategy is, only a riskless return can be earned, since all the opportunities are exploited immediately by other competitors once it appears. Therefore, Problem {\bf (G)} degenerates to \eqref{mmvPC.intro.pref.mv.theta8}.

\par The following statement illustrates the equivalence between Problem {\bf (MMV$_{\theta}$)} and Problem {\bf (G)}. We set
\begin{equation*}
I^{\sharp}_{\theta}(Q): =  \sup_{a \in \mathcal{A}} J_{\theta}(a,Q), \quad \Phi^{\sharp}_{\theta}: =  \inf_{Q \in \Delta^2(P)} I^{\sharp}_{\theta}(Q), \quad \Phi_{\theta} = \sup_{a \in \mathcal{A}}I_{\theta}(a).
\end{equation*}

\begin{definition} \label{mmvPC.definition.1}
	\par If there exists $a^* \in \mathcal{A}$ and a probability measure $Q^* \in \Delta^2(P)$ such that
	\begin{align*}
	& J_{\theta}(a^*,Q^*) = \sup_{a \in \mathcal{A}} J_{\theta}(a,Q^*), \\
	& J_{\theta}(a^*,Q^*) = \inf_{Q \in \Delta^2(P)} J_{\theta}(a^*,Q),
	\end{align*}
	then we call the pair $(a^*,Q^*)$ a Nash equilibrium( non-cooperative equilibrium) for Problem {\bf (G)}.
\end{definition}

\begin{definition} \label{mmvPC.definition.2}
	\par We call $a^* \in \mathcal{A}$ an optimal strategy for Problem {\bf (MMV$_{\theta}$)} if $I_{\theta}(a^*) = \Phi_{\theta}$.
\end{definition}

\begin{lemma} \label{mmvPC.lemma.1}
	\par Given some $a^* \in \mathcal{A}$ and $Q^* \in \Delta^2(P)$, the following statements are equivalent:
	\par \noindent (a) $(a^*,Q^*)$ is a Nash equilibrium of Problem {\bf (G)};
	\par \noindent (b) $\forall (a,Q) \in \mathcal{A} \times \Delta^2(P)$, $J_{\theta}(a,Q^*) \leq J_{\theta}(a^*,Q^*) \leq J_{\theta}(a^*,Q)$;
	\par \noindent (c) $I^{\sharp}_{\theta}(Q^*) = \Phi^{\sharp}_{\theta} = \Phi_{\theta} = I_{\theta}(a^*)$.
\end{lemma}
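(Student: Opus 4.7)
The plan is to prove (a) $\Leftrightarrow$ (b) directly from the definitions, and then establish (a) $\Leftrightarrow$ (c) by combining the weak duality inequality $\sup_{a}\inf_{Q} J_\theta \leq \inf_{Q}\sup_{a} J_\theta$ with the saddle-point characterization. The backbone of the argument is the chain of trivial inequalities
\begin{equation*}
I_\theta(a^*) \;\leq\; J_\theta(a^*,Q^*) \;\leq\; I^{\sharp}_\theta(Q^*),
\end{equation*}
which holds without any assumption (the first because $I_\theta(a^*)$ is an infimum over $Q$, the second because $I^{\sharp}_\theta(Q^*)$ is a supremum over $a$), together with $\Phi_\theta \leq \Phi^{\sharp}_\theta$ (weak duality, immediate from taking $\sup_a$ on both sides of $J_\theta(a,Q)\leq I^\sharp_\theta(Q)$ and then $\inf_Q$).

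For (a) $\Rightarrow$ (b), I rewrite the two saddle identities in Definition \ref{mmvPC.definition.1} by unfolding the sup and inf: the first identity is equivalent to $J_\theta(a,Q^*) \leq J_\theta(a^*,Q^*)$ for every $a \in \mathcal{A}$, and the second to $J_\theta(a^*,Q^*) \leq J_\theta(a^*,Q)$ for every $Q \in \Delta^2(P)$. The converse (b) $\Rightarrow$ (a) is the same reading run in the other direction, recovering the sup and the inf from the pointwise bounds.

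For (a) $\Rightarrow$ (c), I use (b) to get $I^{\sharp}_\theta(Q^*) = J_\theta(a^*,Q^*) = I_\theta(a^*)$. Then $\Phi^{\sharp}_\theta \leq I^{\sharp}_\theta(Q^*) = I_\theta(a^*) \leq \Phi_\theta$, which combined with the weak-duality inequality $\Phi_\theta \leq \Phi^{\sharp}_\theta$ forces all four quantities to coincide. For (c) $\Rightarrow$ (a), I read off from (c) that $I_\theta(a^*) = I^{\sharp}_\theta(Q^*)$; plugging this into the unconditional chain $I_\theta(a^*) \leq J_\theta(a^*,Q^*) \leq I^{\sharp}_\theta(Q^*)$ collapses both inequalities to equalities, which yields simultaneously $J_\theta(a^*,Q^*) = \sup_{a} J_\theta(a,Q^*)$ and $J_\theta(a^*,Q^*) = \inf_{Q} J_\theta(a^*,Q)$, i.e.\ the Nash equilibrium property.

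This is a standard saddle-point lemma and there is no real obstacle; the only thing to handle carefully is bookkeeping between sup/inf and the pointwise inequalities, so the proof reduces to tracking four inequalities and invoking weak duality once. No regularity or measurability hypothesis beyond the well-definedness of $J_\theta$, $I_\theta$, $I^\sharp_\theta$ (already implicit in the problem formulation) is needed.
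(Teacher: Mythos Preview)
Your proof is correct and is exactly the standard saddle-point argument; the paper itself does not spell out a proof but simply refers to Proposition~8.1 of \cite{aubin2013optima}, whose content is precisely the chain of inequalities and the weak-duality step you wrote down.
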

\begin{proof}
	\par The proof is the same as that of Proposition 8.1 in \cite{aubin2013optima}.
\end{proof}

\par Lemma \ref{mmvPC.lemma.1} tells us that the optimal strategy of Problem {\bf (MMV$_{\theta}$)} actually lies in the Nash equilibrium of Problem {\bf (G)}.

\subsection{Financial market and insurance market} \label{mmvPC.section.market}

\par Let $P$ be the real world probability measure. The following standard Black-Scholes type financial market is considered.
\begin{align}
\label{mmvPC.stocks.origin}
\begin{cases}
B(t) =& B(0) + \int_0^t r(s)B(s)ds, \\
S(t) =& S(0) + \int_0^t \mu(s)S(s) ds+   \int_0^t \sigma(s) S(s) dW(s), \quad t \in [0,T],
\end{cases}
\end{align}
where $B(t)$ is the riskless asset, $S(t)$ is the price of the risky stock and $W(t)$ is a standard Brownian motion. Denote $\mathbb{F}^W:=\{ \mathcal{F}_t^W: t \leq T \}$, where $\mathcal{F}_t^W$ is the completion of $\sigma(W(s): s \leq t)$ under $P$. 

{\color{red}
\begin{assumption} \label{mmvPC.assumption.1}
	\par Assume that $r$, $\mu$ and $\sigma$ are all bounded positive deterministic functions such that $\mu(t) > r(t)$ for all $t \in [0,T]$ and $\sigma$ satisfies the non-degenerate condition: there exists $\underline{\sigma} >0$ such that $\sigma(t) > \underline{\sigma}$ for all $t \in [0,T]$. 
\end{assumption}
}
\par On the other hand, the following controlled Cram{\'e}r-Lundberg model is considered as the insurance surplus process
\begin{align*}
dR(t) = (1+\kappa)\mu_0(t) dt - (1+ \kappa_r)(1-u(t))\mu_0(t) dt - u(t) dC(t),
\end{align*}
where $C(t)$ is the claim process, $\mu_0(t) := \frac{d\mathbb{E}^PC(t)}{dt}$, $\kappa$ and $\kappa_r$ are the safety loadings of the insurer and the reinsurer respectively, $u$ is the control process representing the retention level of the insurance company. Generally, $0 < \kappa \leq \kappa_r$. Here $C(t) = \int_0^t \int_{K} z N(ds,dz)$ with a Poisson random measure $N(ds,dz)$ counts the number of claims within the time interval $[s,s+ds)$ whose sizes lie in the interval $[z,z +dz)$. $K$ is the support of $N(ds,dz)$. Let $v(dz)$ be the L\'{e}vy measure of the random measure $N(ds,dz)$ and $\widetilde{N}(ds,dz) = N(ds,dz) - v(dz)ds$ be the compensated Poisson random measure. Then $\mu_0(t) \equiv \mu_0 = \int_{K} z v(dz) < +\infty$. The surplus process becomes
\begin{align}
\label{mmvPC.surplus}
dR(t) =\bigg[\kappa_r u(t) + (\kappa - \kappa_r)\bigg]\mu_0dt - u(t) \int_{K}z \widetilde{N}(dt,dz).
\end{align}
Denote by $\mathbb{F}^N:=\{ \mathcal{F}_t^N : t \leq T \}$, where $\mathcal{F}_t^N$ is the completion of the $\sigma$-field generated by the Poisson measure $N(ds,dz)$ under $P$. By Proposition 2.7.7 of \cite{karatzas2012brownian}, $\mathbb{F}^N$ is a right-continuous filtration. Generally, the insurance market is independent of the capital market, i.e., the filtration $\mathbb{F}^W$ is independent of $\mathbb{F}^N$. Denote $\mathbb{F} := \{ \mathcal{F}_t: t \leq T \} := \{ \mathcal{F}_t^W \bigvee \mathcal{F}_t^N: t \leq T \}$ as the combined filtration of $\mathbb{F}^W$ and $\mathbb{F}^N$, where $\mathcal{F}_t^W \bigvee \mathcal{F}_t^N$ is the smallest $\sigma$-field containing both $\mathcal{F}_t^W$ and $\mathcal{F}_t^N$.

\par The insurance company's wealth process $X(t)$ can be formulated as the following controlled process
\begin{align}
	dX(t)
	\nonumber =& \pi(t)\frac{dS(t)}{S(t)}+(X(t)-\pi(t))\frac{dB(t)}{B(t)}+dR(t)\\
	\nonumber =&  \bigg[r(t)X(t)+\pi(t) (\mu(t) - r(t))  + \mu_0 \kappa_r u(t)+\mu_0(\kappa - \kappa_r)\bigg]dt \\
	\label{mmvPC.wealth} &- u(t) \int_{K} z \widetilde{N}(dt,dz) +\pi(t) \sigma(t) dW(t), \quad t \in [0,T],
\end{align}
with initial value $X(0) = x_0$. For notational convenience, let $a := (\pi,u)$ and
\begin{equation*}
	\rho(t) := \frac{\mu_0^2 \kappa_r^2}{\sigma_0^2} + \frac{(\mu(t) - r(t))^2}{\sigma(t)^2},
\end{equation*}
where  $\sigma_0^2 := \int_{K} z^2 v(dz) < +\infty$. Since $\frac{\mu_0^2 \kappa_r^2}{\sigma_0^2}$ is the cost of risk-transfer in insurance market, and $\frac{(\mu(t) - r(t))^2}{\sigma(t)^2}$ is the capital market risk premium, $\rho(t)$ measures how much benefit the insurance company can derive from undertaking the risk.


\section{Auxiliary SDE}

\par To solve Problem {\bf (G)} for the prospect $X(T)$, i.e., the terminal wealth of the insurance company, it is intuitive to characterize $Q$ by some process which is more tractable than a probability measure. If the probability measure $Q$ is constrained to a subset of $\Delta^2(P)$:
\begin{equation*}
\widetilde{\Delta}^2(P) = \bigg\{Q \sim P: Q( \Omega ) = 1, \mathbb{E}^P \bigg[\left(\frac{dQ}{dP}\right)^2\bigg] < \infty \bigg\}
\end{equation*}
($Q \sim P$ means $Q$ is equivalent to $P$), we can take a similar procedure as that of \cite{mataramvura2008risk} and \cite{Elliott2009Portfolio} to characterize $Q$ by the following SDE
\begin{equation}
\label{mmvPC.Q.representation}
Y(t) = 1  + \int_0^t Y(s) p(s)dW(s) + \int_0^t \int_{K} Y(s-) q(s,z) \widetilde{N}(ds,dz), \quad \forall t \in [0,T],
\end{equation}
where $q(s,z)$ is a $\mathcal{F}_t$-predictable process and $p(s)$ is a $\mathcal{F}_t$-adapted process. Here $\frac{dQ}{dP}\big|_{\mathcal{F}_t}$ is the unique solution of SDE \eqref{mmvPC.Q.representation}(see \cite{mataramvura2008risk} and \cite{Elliott2009Portfolio} for the cases without jumps). Specially, we can regard $q(s,z)$ and $p(s)$ as control processes which have a one-to-one correspondence with $Q \in \widetilde{\Delta}^2(P)$. 

\par In this paper, we consider the absolutely continuous probability measures $\Delta^2(P) = \big\{Q \ll P: Q( \Omega ) = 1, E\left[\left(\frac{dQ}{dP}\right)^2\right] < \infty \big\}$. In this case, $\frac{dQ}{dP}\big|_{\mathcal{F}_t}$ may equal zero at some $t \in [0,T]$. To overcome this difficulty, we take a similar procedure as shown in \cite{li2021optimal} in which a continuous case is considered. By using the tools of discontinuous martingale analysis in \cite{kabanov1979absolute}, \cite{liptser2012theory} and \cite{hansen2006robust}, it can be proved that the martingale representation \eqref{mmvPC.Q.representation} still holds for all $Q \in \Delta^2(P)$(Theorem \ref{mmvPC.theorem.Qcharacterization}). We can then still make use of the auxiliary Problem {\bf (P$_{sxy}$)} to help solve the original problem. The proof is in the appendices. The original problem can be replaced by the following auxiliary problem
 
\begin{numcases}{}
\notag dX(t) =  \bigg[r(t)X(t)+\pi(t) (\mu(t) - r(t))  + \mu_0 \kappa_r u(t)+\mu_0(\kappa - \kappa_r)\bigg]dt \\
\label{mmvPC.dynamic.x} \qquad \qquad - u(t) \int_{K} z \widetilde{N}(dt,dz) +\pi(t) \sigma(t) dW(t), \\
\label{mmvPC.dynamic.y} dY(t) = Y(t) p(t)dW(t) + \int_{K} Y(t-) q(t,z) \widetilde{N}(dt,dz) , \quad t \in [s,T],
\end{numcases} 
with $\int_{K} q(t,z) N(\{t\},dz) \geq -1$, $t \in [0,T]$ and initial values $X(s) = x > 0$, $Y(s) = y > 0$.
 
\begin{problem}{\bf (P$_{sxy}$)} Let
	\begin{equation}
	\label{mmvPC.obj.3}
	J^{a,b}(s,x,y) = \mathbb{E}_{s,x,y}^P \bigg[X^{a}(T)Y^{b}(T)+\frac{1}{2\theta}(Y^{b}(T))^2 \bigg],
	\end{equation}
	where $\mathbb{E}_{s,x,y}^P[\cdot]$ represents $\mathbb{E}^P[\cdot|X(s) = x,Y(s) = y]$. The player one wants to maximize $J^{a,b}(s,x,y)$ with its strategy $a = (\pi,u)$ over $\mathcal{A}[s,T]$ defined below and the player two wants to maximize $-J^{a,b}(s,x,y)$ with its strategy $b = (p,q)$ over $\mathcal{B}[s,T]$ defined below.
\end{problem}

\begin{definition} \label{mmvPC.definition.4}
	\par {\bf (admissible)} The strategy $a(t) = (\pi(t),u(t))$ of the player one is admissible for Problem {\bf (P$_{sxy}$)}, if $u: [s,T] \to \mathbb{R}_+$ and $\pi: [s,T] \to \mathbb{R}$ are $\mathcal{F}_t$-predictable process and $\mathcal{F}_t$-adapted process, respectively, such that
	\begin{equation*}
	\mathbb{E}^P \int_s^T \pi(t)^2 + u(t)^2 dt < \infty,
	\end{equation*}
	moreover, we denote $\mathcal{A}[s,T]$ as the set of all admissible strategies $a(t) = (\pi(t),u(t))$.
	\par The strategy $b(t,z) = (p(t),q(t,z))$ of the player two is admissible for Problem {\bf (P$_{sxy}$)}, if for any fixed $z \in K$, $q(\cdot,z): [s,T] \to \mathbb{R}$ and $p: [s,T] \to \mathbb{R}$ are $\mathcal{F}_t$-predictable process and $\mathcal{F}_t$-adapted process, respectively, satisfying
	\begin{equation*}
	\Delta L(t) \equiv \int_{K} q(t,z) N(\{t\},dz) \geq -1, \quad t \in [s,T],
	\end{equation*}
	 such that SDE \eqref{mmvPC.dynamic.y} has a unique solution which is a {\em nonnegative} $\mathcal{F}_t$-adapted square-integrable $P$-martingale satisfying $\mathbb{E}^PY(t) = 1$ for $t \in [s,T]$. Moreover, we denote $\mathcal{B}[s,T]$ as the set of all admissible strategies $b(t,z) = (p(t),q(t,z))$.
\end{definition}
{\color{red}
In general, the SDE \eqref{mmvPC.dynamic.y} admits an exponential martingale solution (positive almost everywhere) if $p(t)$ and $q(t,z)$ satisfy some additional conditions such as the Novikov's condition. However, we do not include these additional conditions in Definition \ref{mmvPC.definition.4}. The SDE \eqref{mmvPC.dynamic.y} can also admit {\em nonnegative} solutions which is enough in our paper even if the Novikov's condition does not hold.
\begin{example}
	Let $o(t) = p(t,z) := 0$ and $q(t,z) :=\eta(t-)$ where $\eta(t) = \frac{1}{\tau} Z(t)^{-1}1_{\{t < \tau\}}$ and $Z(t)$ is a square-integrable marginale:
	\begin{align*}
	Z(t) := \begin{cases}
	&\frac{N_2(t) - t + \tau}{\tau}, \quad 0 \leq t < \tau, \\
	&\frac{N_2(\tau)}{\tau}, \quad \tau \leq t \leq T.
	\end{cases}
	\end{align*}
	Then the SDE \eqref{mmvPC.sde.y.representation} becomes
	\begin{align}
	\label{mmvPC.dynamic.y.example}
	Y(t) = 1 + \int_0^t Y(s-) q(s) d\widetilde{N}_2(s).
	\end{align}
	It is easy to check that \eqref{mmvPC.dynamic.y.example} admits a solution $Y(t) = Z(t)$(indistinguishable). Indeed, if $0 \leq t < \tau$, then
	\begin{align*}
		1 + \int_0^t Z(s-) q(s) d\widetilde{N}_2(s) =& 1 + \int_0^t Z(s-) \eta(s-) d\widetilde{N}_2(s) \\
		=& 1 + \int_0^t Z(s-)\frac{1}{\tau} Z(s-)^{-1}1_{\{s \leq \tau\}} d\widetilde{N}_2(s) \\
		=& 1 + \frac{N_2(t)-t}{\tau} = Z(t).
	\end{align*}
	Meanwhile, if $\tau \leq t$, then
	\begin{align*}
		1 + \int_0^t Z(s-) q(s) d\widetilde{N}(s) =& 1 + \int_0^{\tau} Z(s-) \eta(s-) d\widetilde{N}_2(s) + \int_{\tau}^t Z(s-) \eta(s-) d\widetilde{N}_2(s) \\
		=& 1 + \int_0^{\tau} Z(s-)\frac{1}{\tau} Z(s-)^{-1}1_{\{s \leq \tau\}} d\widetilde{N}_2(s) \\
		=& 1 + \frac{N_2(\tau)-\tau}{\tau} = \frac{N_2(\tau)}{\tau} = Z(t).
	\end{align*}
	Therefore, $1 + \int_0^t Z(s-) q(s) d\widetilde{N}_2(s) = Z(t)$ for any $t \in [0,T]$. By substituting $Z(t)$ into $q(t)=\eta(t-)= \frac{1}{\tau} Z(t-)^{-1}1_{\{t \leq \tau\}}$, we have
	\begin{align*}
	q(t) =
	\begin{cases}
	&\frac{1}{N_2(t-)-(t-)+\tau}, \quad 0 \leq t \leq \tau, \\
	&0, \quad \tau < t \leq T.
	\end{cases}
	\end{align*}
	We can notice that on the set of positive measure $\{\omega: N_2(\tau) = 0\}$,
	\begin{align*}
	\int_0^T\int_{\mathbb{R_+}}q(t,z)F_2(dz)dt =& \int_0^{\tau}\frac{1}{N_2(t)-t+\tau}dt \\
	=& \int_0^{\tau}\frac{1}{\tau-t}dt = \infty,
	\end{align*}
	which fails to satisfy the Novikov's condition.
\end{example}
}

\par The infinitesimal generator of $(X(t),Y(t))$ for any smooth function $\phi(t,x,y)$ is given by
\begin{align*}
	\mathcal{L}_t^{a,b} \phi(t,x,y) =& \frac{\partial \phi(t,x,y)}{\partial t}+\bigg[r(t)x + \pi (\mu(t)-r(t)) + \mu_0 \kappa_r u +\mu_0(\kappa - \kappa_r)\bigg]\frac{\partial \phi(t,x,y)}{\partial x} \\
	&+\frac{1}{2} \pi^2 \sigma(t)^2 \frac{\partial ^2 \phi(t,x,y)}{\partial x^2} +\frac{1}{2} y^2 p^2 \frac{\partial ^2 \phi(t,x,y)}{\partial y^2} + y \pi \sigma(t) p \frac{\partial ^2 \phi(t,x,y)}{\partial x \partial y} \\
	&+ \int_{K} \bigg\{ \phi(t,x-uz,y+yq(z)) - \phi(t,x,y) + u z \frac{\partial \phi(t,x,y)}{\partial x} - y q(z) \frac{\partial \phi(t,x,y)}{\partial y} \bigg\} v(dz).
\end{align*}

\par We only consider here that $a(t)$ and $b(t)$ are both Markov feedback control, that is,  $a(t) = a(t,X(t),Y(t))$ and $b(t,z) = b(t,z,X(t),Y(t))$.

\par It follows from \cite{yeung2006cooperative} and \cite{mataramvura2008risk} that the HJBI equation is given by
\begin{align}
	\label{mmvPC.hjbi.equation}
	\begin{cases}
	&\mathcal{L}_t^{\hat{a},\hat{b}}\phi(t,x,y) = 0, \qquad \forall (t,x,y) \in [0,T) \times \mathbb{R} \times \mathbb{R}_+, \\
	&\mathcal{L}_t^{\hat{a},b}\phi(t,x,y) \geq 0, \qquad \forall b \in \mathbb{R}^{2}, \qquad \forall (t,x,y) \in [0,T) \times \mathbb{R} \times \mathbb{R}_+, \\
	&\mathcal{L}_t^{a,\hat{b}}\phi(t,x,y) \leq 0, \qquad \forall a \in \mathbb{R} \times \mathbb{R}_+, \qquad \forall (t,x,y) \in [0,T) \times \mathbb{R} \times \mathbb{R}_+, \\
	&\phi(T,x,y) = xy + \frac{1}{2 \theta} y^2,
	\end{cases}
\end{align}
where the feedback pair $\big(\hat{a}(t,x,y), \hat{b}(t,z,x,y)\big)$ is called the saddle point.

\section{Optimal strategies}
\par In this section, we give the value function and the optimal strategy for Problem {\bf (MMV$_{\theta}$)}. The following lemma gives a candidate for the Nash equilibrium of Problem {\bf (P$_{sxy}$)}.

\begin{theorem} \label{mmvPC.theorem.valuefunction}
\par {\bf (solution of HJBI equation)} The following function $\phi(t,x,y) \in C^{1,2,2}([0,T] \times \mathbb{R} \times \mathbb{R}_+)$ is a solution of the HJBI equation \eqref{mmvPC.hjbi.equation}
\begin{align}
	\nonumber \phi(t,x,y) =& \exp \left(\int_t^T r(s) ds \right) xy + \frac{1}{2 \theta} \exp \left(\int_t^T \rho(s) ds \right) y^2\\
	\label{mmvPC.value.function} &+  \mu_0(\kappa - \kappa_r) y \int_t^T  \exp \left(\int_s^T r(v) dv \right)ds.
\end{align}
where $\rho(t) = \frac{\mu_0^2 \kappa_r^2}{\sigma_0^2} + \frac{(\mu(t)-r(t))^2}{\sigma(t)^2}$. The saddle point of the HJBI equation \eqref{mmvPC.hjbi.equation} is given by
\begin{align}
\label{mmvPC.saddle}
\begin{cases}
&\hat{a}(t,x,y) = (\hat{\pi}(t,x,y),\hat{u}(t,x,y)) = \frac{1}{\theta}  \bigg(\frac{\mu(t)-r(t)}{\sigma(t)^2}, \frac{\mu_0 \kappa_r }{ \sigma_0^2} \bigg) \times \exp \left( \int_t^T \rho(v)-r(v) dv \right)y, \\
&\hat{b}(t,z,x,y) = (\hat{p}(t,x,y),\hat{q}(t,z,x,y)) = \bigg(-\frac{\mu(t)-r(t)}{\sigma(t)}, \frac{\mu_0 \kappa_r z}{\sigma_0^2} \bigg).
\end{cases}
\end{align}
\end{theorem}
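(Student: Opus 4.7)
The plan is to verify the claim by direct substitution into the HJBI system. Writing the ansatz compactly as $\phi(t,x,y)=A(t)xy+B(t)y^2+C(t)y$ with $A(t)=\exp(\int_t^T r(s)\,ds)$, $B(t)=\tfrac{1}{2\theta}\exp(\int_t^T\rho(s)\,ds)$ and $C(t)=\mu_0(\kappa-\kappa_r)\int_t^T A(s)\,ds$, these three coefficients satisfy $A'=-rA$, $B'=-\rho B$, $C'=-\mu_0(\kappa-\kappa_r)A$, with $A(T)=1$, $B(T)=1/(2\theta)$, $C(T)=0$, so the terminal condition $\phi(T,x,y)=xy+y^2/(2\theta)$ is immediate. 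The partial derivatives are then $\phi_x=Ay$, $\phi_y=Ax+2By+C$, $\phi_{xx}=0$, $\phi_{yy}=2B$, $\phi_{xy}=A$, with $\phi_t$ read off from the three ODEs.

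The key computational step is evaluating the jump integrand
\begin{equation*}
\phi(t,x-uz,y+yq(z))-\phi(t,x,y)+uz\,\phi_x-yq(z)\,\phi_y.
\end{equation*}
Since $\phi$ is affine in $x$ and at most quadratic in $y$, the expansion collapses drastically: the $Axyq$ from the difference cancels the $-Axyq$ from $-yq\phi_y$, the $-Auyz$ cancels $uzAy$, the linear-in-$q$ piece $2By^2q$ cancels $-2By^2q$, and $Cyq$ cancels $-Cyq$, leaving only $By^2q(z)^2-Auyz\,q(z)$. Substituting this and the derivatives into $\mathcal{L}_t^{a,b}\phi$ and using the ODEs for $A,B,C$, the terms proportional to $xy$, the $y$-term containing $\mu_0(\kappa-\kappa_r)$, and the drift $rx\phi_x$ all telescope out, and the generator simplifies to
\begin{equation*}
\mathcal{L}_t^{a,b}\phi = \pi Ay\bigl[(\mu-r)+\sigma p\bigr] + Auy\Bigl[\mu_0\kappa_r-\int_K zq(z)\,v(dz)\Bigr] + By^2\Bigl[p^2+\int_K q(z)^2 v(dz)-\rho\Bigr].
\end{equation*}

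From this single display the three HJBI requirements fall out almost for free. Plugging in $\hat{b}=\bigl(-(\mu-r)/\sigma,\,\mu_0\kappa_r z/\sigma_0^2\bigr)$ annihilates the bracketed coefficients of $\pi$ and $u$ (using $\int_K z\hat{q}(z)v(dz)=\mu_0\kappa_r$ since $\int_K z^2 v(dz)=\sigma_0^2$), while the last bracket becomes $(\mu-r)^2/\sigma^2+\mu_0^2\kappa_r^2/\sigma_0^2-\rho=0$. Hence $\mathcal{L}_t^{a,\hat{b}}\phi\equiv 0$ for every $a$, which simultaneously delivers $\mathcal{L}_t^{\hat{a},\hat{b}}\phi=0$ and $\mathcal{L}_t^{a,\hat{b}}\phi\le 0$. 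For the remaining inequality I would substitute $\hat{a}$ using the identity $A(t)\exp(\int_t^T(\rho-r)\,dv)=2\theta B(t)$, which follows directly from the definitions of $A$ and $B$, and complete the square to obtain
\begin{equation*}
\mathcal{L}_t^{\hat{a},b}\phi = By^2\Bigl[\bigl(p+\tfrac{\mu-r}{\sigma}\bigr)^2+\int_K\bigl(q(z)-\tfrac{\mu_0\kappa_r z}{\sigma_0^2}\bigr)^2 v(dz)\Bigr]\ge 0,
\end{equation*}
with equality precisely at $b=\hat{b}$.

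The main obstacle is the bookkeeping in the jump integrand: one must carefully expand $\phi(t,x-uz,y+yq(z))$ and track how its $Axy$, $By^2$ and $Cy$ pieces recombine with the compensator terms $uz\phi_x$ and $-yq(z)\phi_y$ so that only the clean residual $By^2q^2-Auyzq$ survives. Once that cancellation is pinned down, everything else is driven by the ODEs for $A,B,C$ together with a completion of squares, and the specific choice of $\rho$ in the ODE for $B$ is precisely what makes the $\rho$-term cancel the two positive contributions from $p$ and $q$ at $b=\hat{b}$.
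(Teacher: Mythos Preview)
Your proposal is correct and follows essentially the same route as the paper: the same ansatz $\phi=\Lambda xy+\Theta y^2+\Psi y$, the same ODEs for the coefficients, the same cancellation in the jump integrand, and the same observation that $\mathcal{L}_t^{a,\hat b}\phi\equiv 0$ for all $a$. The only difference is organizational---the paper first performs the sequential $\inf_b$ then $\sup_a$ to \emph{derive} the candidate saddle and the ODEs, whereas you take the stated $\phi$ and $(\hat a,\hat b)$ as given and verify the HJBI inequalities directly by completing the square; both arrive at the same computations.
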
 

\begin{proof}
	\par We study the following nonlinear equation
	\begin{align}
	\nonumber 0 =& \sup_{a \in \mathbb{R} \times \mathbb{R}_+}  \inf_{b \in \mathbb{R}^{2}} \mathcal{L}_t^{a,b} \phi(t,x,y)  \\
	\nonumber =& \sup_{a \in \mathbb{R} \times \mathbb{R}_+} \inf_{b \in \mathbb{R}^{2}} \{ \frac{\partial \phi(t,x,y)}{\partial t}+\bigg[r(t)x + \pi (\mu(t)-r(t)) + \mu_0 \kappa_r u +\mu_0(\kappa - \kappa_r)\bigg]\frac{\partial \phi(t,x,y)}{\partial x} \\
	\nonumber &+\frac{1}{2} \pi^2 \sigma(t)^2 \frac{\partial ^2 \phi(t,x,y)}{\partial x^2} +\frac{1}{2} y^2 p^2 \frac{\partial ^2 \phi(t,x,y)}{\partial y^2} + y \pi \sigma(t) p \frac{\partial ^2 \phi(t,x,y)}{\partial x \partial y} \\
	\label{mmvPC.hjbi.equation.compact} &+ \int_{K} \bigg\{ \phi(t,x-uz,y+yq(z)) - \phi(t,x,y) + u z \frac{\partial \phi(t,x,y)}{\partial x} - y q(z) \frac{\partial \phi(t,x,y)}{\partial y} \bigg\} v(dz) \},
	\end{align}
	with the terminal condition $\phi(T,x,y) = xy + \frac{1}{2 \theta} y^2$. Inspired by the terminal condition of \eqref{mmvPC.hjbi.equation.compact}, we try a solution of the following form
	\begin{equation}
	\label{mmvPC.guess}
	\phi(t,x,y) = \Lambda(t)xy + \Theta(t)y^2 + \Psi(t)y,
	\end{equation}
	where $\Lambda(T)=1$, $\Theta(T) = \frac{1}{2\theta}$ and $\Psi(T) = 0$. Therefore,
	\begin{align}
	\nonumber &\frac{\partial \phi}{\partial t} = \Lambda'(t)xy+\Theta'(t)y^2+\Psi'(t)y, \quad \frac{\partial \phi}{\partial x} = \Lambda(t)y, \quad \frac{\partial ^2 \phi}{\partial x^2} = 0, \\
	\label{mmvPC.phi.differential} &\frac{\partial \phi}{\partial y} = \Lambda(t)x + 2 \Theta(t)y + \Psi(t), \quad \frac{\partial ^2 \phi}{\partial y^2} = 2\Theta(t), \quad \frac{\partial ^2 \phi}{\partial x \partial y} = \Lambda(t).
	\end{align}
	
	\par Substituting \eqref{mmvPC.phi.differential} into \eqref{mmvPC.hjbi.equation.compact} gives
	\begin{align}
	\nonumber 0 =& \sup_{a \in \mathbb{R} \times \mathbb{R}_+}  \{ \Lambda'(t)xy+\Theta'(t)y^2+\Psi'(t)y \\
	\nonumber &+\bigg[r(t)x + \pi (\mu(t)-r(t)) + \mu_0 \kappa_r u +\mu_0(\kappa - \kappa_r)\bigg]\Lambda(t)y \\
	\nonumber & + \inf_{p \in \mathbb{R}} \bigg[y^2 p^2 \Theta(t) + y \pi \sigma(t) p \Lambda(t)\bigg] \\
	\label{mmvPC.hjbi.equation.compact.guess} &+ \int_{K} \inf_{q \in \mathbb{R}} \bigg[ -\Lambda(t)uzyq(z) + \Theta(t)y^2q(z)^2 \bigg] v(dz) \}.
	\end{align}

	\par To study the equation \eqref{mmvPC.hjbi.equation.compact.guess}, first let $a \in \mathbb{R} \times \mathbb{R}_+$ be arbitrary and solve
	\begin{equation*}
	\begin{cases}
	\inf_{p \in \mathbb{R}} \bigg[y^2 p^2 \Theta(t) + y \pi \sigma(t) p \Lambda(t)\bigg],\\
	\inf_{q \in \mathbb{R}} \bigg[ -\Lambda(t)uzyq(z) + \Theta(t)y^2q(z)^2 \bigg],
	\end{cases}
	\end{equation*}
	pointwisely over the jump size $z \in K$. If $\Theta(t) > 0$, the minimum is attained at
	\begin{equation}
	\label{mmvPC.q.minimizer}
	\begin{cases}
	\hat{p}(t,x,y) = - \frac{\Lambda(t)}{2 y \Theta(t)} \pi \sigma(t), \\
	\hat{q}(t,z,x,y) = \frac{\Lambda(t)}{2 y \Theta(t)}u z.
	\end{cases}
	\end{equation}
	
	\par Substituting \eqref{mmvPC.q.minimizer} into \eqref{mmvPC.hjbi.equation.compact.guess} gives
	\begin{align}
	\nonumber 0 =&  \Lambda'(t)xy+\Theta'(t)y^2+\Psi'(t)y + [r(t)x + \mu_0(\kappa - \kappa_r)]\Lambda(t)y \\
	\nonumber &+ \sup_{\pi \in \mathbb{R}} \bigg[\pi (\mu(t)-r(t)) \Lambda(t)y - \frac{\Lambda(t)^2}{4 \Theta(t)}\pi^2 \sigma(t)^2\bigg] \\
	\label{mmvPC.hjbi.equation.compact.guess.2} &  + \sup_{u \geq 0} \bigg[\mu_0 \kappa_r u \Lambda(t)y -  \frac{\Lambda(t)^2}{4 \Theta(t)}u^2 \sigma_0^2\bigg].
	\end{align}
	
	\par Secondly, we try to solve
	\begin{equation*}
	\begin{cases}
	\sup_{\pi \in \mathbb{R}} \bigg[\pi (\mu(t)-r(t)) \Lambda(t)y - \frac{\Lambda(t)^2}{4 \Theta(t)}\pi^2 \sigma(t)^2\bigg], \\
	\sup_{u \geq 0} \bigg[\mu_0 \kappa_r u \Lambda(t)y -  \frac{\Lambda(t)^2}{4 \Theta(t)}u^2 \sigma_0^2\bigg].
	\end{cases}
	\end{equation*}
	
	\par If $\frac{\Lambda(t)^2}{\Theta(t)} > 0$, then the maximum is attained at
	\begin{equation}
	\label{mmvPC.u.maximizer}
	\begin{cases}
	\hat{\pi}(t,x,y) = \frac{2y  \Theta(t)}{\Lambda(t)} \frac{\mu(t)-r(t)}{\sigma(t)^2} , \\
	\hat{u}(t,x,y) = \frac{2y \Theta(t)}{\Lambda(t)} \frac{\mu_0 \kappa_r}{\sigma_0^2}.
	\end{cases}
	\end{equation}
	Substituting \eqref{mmvPC.u.maximizer} into \eqref{mmvPC.hjbi.equation.compact.guess.2}, we get the following three ordinary differential equations
	\begin{align*}
	\begin{cases}
	\Lambda'(t) + r(t)\Lambda(t) = 0, \\
	\Theta'(t) + \rho(t) \Theta(t) = 0, \\
	\Psi'(t) + \mu_0(\kappa-\kappa_r)\Lambda(t) = 0, \\
	\end{cases}
	\end{align*}
	with terminal conditions $\Lambda(T)=1$, $\Theta(T) = \frac{1}{2\theta}$ and $\Psi(T) = 0$. It is easy to find that the solutions of the above equations are as follows
	\begin{align}
	\label{mmvPC.guess.coefficient}
	\begin{cases}
	\Lambda(t) = \exp \left(\int_t^T r(s) ds \right), \\
	\Theta(t) = \frac{1}{2 \theta} \exp \left(\int_t^T \rho(s) ds \right), \\
	\Psi(t) = \mu_0(\kappa - \kappa_r) \int_t^T  \exp \left(\int_s^T r(v) dv \right)ds.\\
	\end{cases}
	\end{align}
	
	\par  Since $\Theta(t)$ and $\Lambda(t)$ are both positive functions, the above requirements are all satisfied. By plugging \eqref{mmvPC.guess.coefficient} into \eqref{mmvPC.u.maximizer} and \eqref{mmvPC.q.minimizer}, the saddle point for the non-linear PDE \eqref{mmvPC.hjbi.equation.compact} is proved to be \eqref{mmvPC.saddle}. Moreover, it is easy to verify that \eqref{mmvPC.saddle} is indeed the saddle point of the HJBI equation \eqref{mmvPC.hjbi.equation} since it satisfies
	\begin{align*}
	& \mathcal{L}_t^{\hat{a}(t,x,y),\hat{b}(t,\cdot,x,y)}\phi(t,x,y) = 0, \\
	& \mathcal{L}_t^{\hat{a}(t,x,y),b}\phi(t,x,y) \geq 0, \qquad \forall b \in \mathbb{R}^2, \\
	& \mathcal{L}_t^{a,\hat{b}(t,\cdot,x,y)}\phi(t,x,y) =  0, \qquad \forall a \in \mathbb{R} \times \mathbb{R}_+.
	\end{align*}
	In the end, by inserting \eqref{mmvPC.guess.coefficient} into \eqref{mmvPC.guess}, \eqref{mmvPC.value.function} is proved to be a smooth solution of \eqref{mmvPC.hjbi.equation}.
\end{proof}

\par Define
\begin{equation}
	\label{mmvPC.nash.equilibrium}
	\begin{cases}
		&a^*(t;s,x,y) = \hat{a}(t,X^{\hat{a}}(t;s,x,y),Y^{\hat{b}}(t;s,x,y)), \\
		&b^*(t,z;s,x,y) = \hat{b}(t,z,X^{\hat{a}}(t;s,x,y),Y^{\hat{b}}(t;s,x,y)), \\
	\end{cases}
\end{equation}
where $(s,x,y)$ represents that $X(s) =x$, $Y(s) = y$.

\begin{theorem} \label{mmvPC.theorem.verification}
\par {\bf (verification theorem)} $\phi(t,x,y)$ in \eqref{mmvPC.value.function} is the value function of Problem {\bf (P$_{sxy}$)}. $(a^*, b^*)$ is a Nash equilibrium of Problem {\bf (P$_{sxy}$)}.
\end{theorem}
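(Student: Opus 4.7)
The plan is a classical Itô-based verification. Fix an admissible pair $(a,b) \in \mathcal{A}[s,T] \times \mathcal{B}[s,T]$ and apply Itô's formula to $\phi(t, X^a(t), Y^b(t))$ on $[s,T]$, where $\phi$ is the smooth function constructed in Theorem \ref{mmvPC.theorem.valuefunction}. The drift assembles exactly into $\mathcal{L}_t^{a,b}\phi$, while the stochastic integrals against $dW$ and the compensated Poisson measure $\widetilde{N}(dt,dz)$ produce a local martingale $M$. I would localize by a sequence of stopping times $\tau_n \uparrow T$ that makes $M^{\tau_n}$ a true martingale and take $\mathbb{E}^P_{s,x,y}$ on both sides, obtaining
\begin{equation*}
\mathbb{E}^P_{s,x,y}\bigl[\phi(T \wedge \tau_n, X(T \wedge \tau_n), Y(T \wedge \tau_n))\bigr] = \phi(s, x, y) + \mathbb{E}^P_{s,x,y} \int_s^{T \wedge \tau_n} \mathcal{L}_t^{a,b}\phi(t, X(t), Y(t))\,dt.
\end{equation*}

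Next I would invoke each of the three HJBI pieces of \eqref{mmvPC.hjbi.equation}. For the pair $(a, b^*)$ the integrand is $\leq 0$; for $(a^*, b)$ it is $\geq 0$; for $(a^*, b^*)$ it vanishes identically. Passing $n \to \infty$ and using the terminal condition $\phi(T, X(T), Y(T)) = X(T)Y(T) + \frac{1}{2\theta}Y(T)^2$ then yields
\begin{equation*}
J^{a, b^*}(s,x,y) \leq \phi(s,x,y) \leq J^{a^*, b}(s,x,y), \qquad J^{a^*, b^*}(s,x,y) = \phi(s,x,y).
\end{equation*}
These are exactly the Nash equilibrium conditions of Definition \ref{mmvPC.definition.1} (equivalently Lemma \ref{mmvPC.lemma.1}(b)); hence $(a^*, b^*)$ is a Nash equilibrium, and by Lemma \ref{mmvPC.lemma.1}(c) the common value $\phi(s,x,y)$ is the value function of Problem (P$_{sxy}$).

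The main obstacle is justifying the limit $n \to \infty$: one needs uniform integrability of the stopped terminal value $\phi(T \wedge \tau_n, X(T \wedge \tau_n), Y(T \wedge \tau_n))$ and of the inner drift integral. Because $\phi$ is bilinear in $(x,y)$ and quadratic in $y$, this reduces to second-moment bounds on $X^a$ and $Y^b$. Square-integrability of $Y^b$ is built into Definition \ref{mmvPC.definition.4} together with the martingale property $\mathbb{E}^P Y(t) = 1$; second moments of $X^a$ follow from Gronwall applied to the linear SDE \eqref{mmvPC.dynamic.x} using the $\mathbb{L}^2$ bounds on $\pi, u$ and the boundedness of $r, \mu, \sigma$ from Assumption \ref{mmvPC.assumption.1}, and the cross term $X^a Y^b$ is then controlled by Cauchy--Schwarz. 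A related subtlety is verifying that the candidate equilibrium $(a^*, b^*)$ defined in \eqref{mmvPC.nash.equilibrium} is itself admissible: $\hat b$ is deterministic and bounded, so \eqref{mmvPC.dynamic.y} delivers a nonnegative square-integrable martingale (in fact an exponential one), while $\hat a$ equals a bounded deterministic function times $Y^{b^*}$, which inherits the required $\mathbb{L}^2$ integrability.
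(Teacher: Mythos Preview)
Your proposal is correct and follows essentially the same route as the paper's proof: Itô's formula on $\phi$, localization by stopping times, the three HJBI inequalities, passage to the limit, and an admissibility check for $(a^*,b^*)$ (which the paper isolates as Lemma \ref{mmvPC.lemma.admissible.check}). If anything, you are more explicit than the paper about the uniform integrability needed to send the stopping times to $T$; the paper simply writes ``Letting $B\to\infty$'' without spelling out the second-moment bounds you describe.
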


\par We first give a lemma verifying the admissibility of $a^*(t;s,x,y)$ and $b^*(t,z;s,x,y)$ before proving this theorem.
\begin{lemma} \label{mmvPC.lemma.admissible.check}
	The strategies $a^*(t;s,x,y)$ and $b^*(t,z;s,x,y)$ defined by \eqref{mmvPC.nash.equilibrium} belong to $\mathcal{A}[s,T]$ and $\mathcal{B}[s,T]$, respectively.
\end{lemma}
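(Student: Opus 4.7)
\noindent The plan is to verify $b^* \in \mathcal{B}[s,T]$ first and then leverage the resulting state process $Y^{\hat{b}}$ to check $a^* \in \mathcal{A}[s,T]$; the only substantive step will be an $L^2$-estimate for $Y^{\hat{b}}$.

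From \eqref{mmvPC.saddle} I observe that $\hat{b}(t,z,x,y)$ depends on neither $x$ nor $y$, so $b^*(t,z;s,x,y) = (\hat{p}(t), \hat{q}(t,z))$ is a deterministic function of $(t,z)$. Boundedness of $\hat{p}(t) = -(\mu(t)-r(t))/\sigma(t)$ follows from Assumption \ref{mmvPC.assumption.1}, giving adaptedness and the integrability required for the $dW$-integral. Because claim sizes in the Cram\'er--Lundberg model satisfy $z \geq 0$ and $\mu_0, \kappa_r, \sigma_0 > 0$, the jump coefficient $\hat{q}(t,z) = \mu_0 \kappa_r z/\sigma_0^2$ is nonnegative and jointly Borel in $(t,z)$ (hence predictable), and $\Delta L(t) = \hat{q}(t,\Delta C(t)) \geq 0 > -1$ is automatic. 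The SDE \eqref{mmvPC.dynamic.y} then admits as its unique solution the Dol\'eans--Dade exponential, which stays strictly positive since $1+\hat{q} > 0$.

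The crux is to show this nonnegative local martingale is a true square-integrable martingale. I would apply It\^o's formula to $Y(t\wedge\tau_n)^2$ for a localizing sequence $\tau_n\uparrow\infty$; after taking expectations the martingale parts vanish and the jump term reduces to its compensator, leaving
\begin{equation*}
\mathbb{E}[Y(t\wedge\tau_n)^2] = y^2 + \mathbb{E}\int_s^{t\wedge\tau_n}\!\Bigl(\hat{p}(r)^2 + \int_K \hat{q}(r,z)^2\, v(dz)\Bigr) Y(r)^2\, dr.
\end{equation*}
Since $\hat{p}$ is bounded under Assumption \ref{mmvPC.assumption.1} and $\int_K \hat{q}(r,z)^2\, v(dz) = \mu_0^2 \kappa_r^2/\sigma_0^2$ is constant, Gronwall's inequality together with monotone convergence as $n \to \infty$ yields $\mathbb{E}[Y(t)^2] \leq y^2 e^{C(T-s)}$ for an explicit constant $C$. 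This $L^2$-bound upgrades the nonnegative local martingale to a true square-integrable $P$-martingale with $\mathbb{E}^P Y(t) = y$, so all conditions in Definition \ref{mmvPC.definition.4} are met and $b^* \in \mathcal{B}[s,T]$.

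For $a^*$, \eqref{mmvPC.saddle} shows $\hat{a}(t,x,y) = (f(t)y, g(t)y)$ with bounded deterministic $f(t) = \theta^{-1}(\mu(t)-r(t))\sigma(t)^{-2}\exp(\int_t^T \rho(v)-r(v)\, dv)$ and $g(t) = \theta^{-1}\mu_0\kappa_r\sigma_0^{-2}\exp(\int_t^T \rho(v)-r(v)\, dv)$. Hence $a^*(t) = (f(t)Y^{\hat{b}}(t-), g(t)Y^{\hat{b}}(t-))$ is $\mathcal{F}_t$-adapted, its $u$-component is nonnegative and $\mathcal{F}_t$-predictable via the left-continuous version of $Y^{\hat{b}}$, and
\begin{equation*}
\mathbb{E}^P\int_s^T \pi^*(t)^2 + u^*(t)^2\, dt \leq \bigl(\|f\|_\infty^2 + \|g\|_\infty^2\bigr) \int_s^T \mathbb{E}^P[Y^{\hat{b}}(t)^2]\, dt < \infty
\end{equation*}
by the $L^2$-bound above, establishing $a^* \in \mathcal{A}[s,T]$. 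The main obstacle is indeed the $L^2$-estimate for $Y^{\hat{b}}$; everything else follows routinely from the boundedness in Assumption \ref{mmvPC.assumption.1} and the finiteness of $\sigma_0^2$.
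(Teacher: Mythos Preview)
Your proof is correct and follows essentially the same route as the paper: first verify that the deterministic $b^*$ with $\hat q\ge 0$ yields a nonnegative square-integrable martingale solution of \eqref{mmvPC.dynamic.y}, then use $\sup_t\mathbb{E}^P[Y^{\hat b}(t)^2]<\infty$ together with the factorization $a^*(t)=(\text{bounded deterministic})\times Y^{\hat b}(t)$ to obtain the $L^2$-integrability required for $a^*\in\mathcal{A}[s,T]$. Your It\^o/Gronwall argument for the $L^2$-bound and your attention to the predictability of $u^*$ via $Y^{\hat b}(t-)$ make explicit what the paper only asserts (it simply states that $b^*$ is ``deterministic, bounded'' and that $Y$ is therefore square-integrable), but the overall structure is the same.
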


\begin{proof}
	\par By Theorem 13 (2) and Lemma 7 (2) of \cite{kabanov1979absolute}, SDE \eqref{mmvPC.dynamic.y} has a unique solution in the class of nonnegative local martingales which we also denote as $Y$. Since $b^*(t,z;s,x,y)$ is deterministic, bounded and satisfies
	\begin{align}
	q^*(t,z;s,x,y) \geq -1,
	\end{align}
	$Y$ is a square-integrable martingale. Consequently, $b^*(t,z;s,x,y)\in \mathcal{B}[s,T]$.
	\par On the other hand, since
	\begin{equation*}
	\sup_{s \leq t \leq T} \mathbb{E}^PY(t)^2 < +\infty,
	\end{equation*}
	we can deduce that
	\begin{equation*}
	\mathbb{E}^P\bigg[ \int_s^T \pi^*(t;s,x,y)^2 + u^*(t;s,x,y)^2 dt\bigg] < +\infty,
	\end{equation*}
	which proves that $a^*(t;s,x,y)\in \mathcal{A}[s,T]$.
\end{proof}

\par Proof of Theorem \ref{mmvPC.theorem.verification} is as follows.
\begin{proof}
	\par By applying It{\^o}'s Lemma to function $\phi(t,x,y)$ in \eqref{mmvPC.value.function}, for any admissible strategies, we have
	\begin{align*}
	&\phi(T,X(T),Y(T)) = \phi(s,x,y) + \int_s^T \mathcal{L}_t^{a,b} \phi(t,X(t),Y(t)) dt \\
	&+ \int_s^T \phi_x \pi(t) \sigma(t) dW(t) + \int_s^T \phi_y Y(t) p(t) dW(t) \\
	&+ \int_s^T \int_{K} \phi(t,X(t-)-u(t)z,Y(t-)(1+q(t,z)))-\phi(t,X(t-),Y(t-)) \widetilde{N}(dt,dz).
	\end{align*}
	\par Let $\tau_B = \inf\{t > 0: |\phi_x| \geq B \} \wedge \inf\{t > 0: |\phi_y Y(t)| \geq B \} \wedge \inf\{t > 0: \int_s^t \int_{K} |\phi(t,X(r-)-u(r)z,Y(r-)(1+q(r,z)))| v(dz)dr \geq B \}$. Recall that \eqref{mmvPC.condition.integrable.property} holds, therefore
	\begin{equation*}
	\mathbb{E}^P \bigg[ \phi(T\vee \tau_B,X(T\vee \tau_B),Y(T\vee \tau_B)) \bigg] = \phi(s,x,y) + \mathbb{E}^P \int_s^{T \vee \tau_B} \mathcal{L}_t^{a,b} \phi(t,X(t),Y(t)) dt.
	\end{equation*}
	\par By Lemma \ref{mmvPC.lemma.admissible.check}, the optimal pair $(a^*(t;s,x,y), b^*(t,z;s,x,y))$ given by \eqref{mmvPC.nash.equilibrium} belongs to $\mathcal{A}[s,T] \times \mathcal{B}[s,T]$, thus is admissible. Since $\phi$ is the solution of the HJBI equation \eqref{mmvPC.hjbi.equation}, substituting $(a^*(t;s,x,y), b^*(t,z;s,x,y))$ into it gives
	\begin{align*}
	\begin{cases}
	&\mathbb{E}^P \bigg[ \phi(T\vee \tau_B,X^{a^*,b^*}(T\vee \tau_B),Y^{a^*,b^*}(T\vee \tau_B)) \bigg] = \phi(s,x,y), \\
	&\mathbb{E}^P \bigg[ \phi(T\vee \tau_B,X^{a^*,b}(T\vee \tau_B),Y^{a^*,b}(T\vee \tau_B)) \bigg] \geq \phi(s,x,y), \\
	&\mathbb{E}^P \bigg[ \phi(T\vee \tau_B,X^{a,b^*}(T\vee \tau_B),Y^{a,b^*}(T\vee \tau_B)) \bigg] \leq \phi(s,x,y).
	\end{cases}
	\end{align*}
	\par Letting $B \to \infty$ we obtain
	\begin{equation*}
	J^{a,b^*}(s,x,y) \leq J^{a^*,b^*}(s,x,y) = \phi(s,x,y) \leq J^{a^*,b}(s,x,y).
	\end{equation*}
	\par Consequently, by Lemma \ref{mmvPC.lemma.1},
	\begin{equation*}
	\phi(s,x,y) = \sup_{a  \in \mathcal{A}[s,T]} \left ( \inf_{b \in \mathcal{B}[s,T]}J^{a,b}(s,x,y) \right ) = \inf_{b \in \mathcal{B}[s,T]} \left ( \sup_{a  \in \mathcal{A}[s,T]} J^{a,b}(s,x,y) \right ),
	\end{equation*}
	and $(a^*(t;s,x,y), b^*(t,z;s,x,y))$ is a Nash equilibrium of Problem {\bf (P$_{sxy}$)}.
\end{proof}

\begin{lemma} \label{mmvPC.lemma.x.y}
\par Let $X^{a^*}(t)$ and $Y^{b^*}(t)$ be the state processes under the Nash equilibrium $(a^*, b^*)$, then
\begin{align}
\label{mmvPC.equation.wealth.P}
\frac{1}{\theta}   Y^{b^*}(t) e^{  \int_t^T \rho(u) - r(u)du  } = x e^{ \int_s^t r(s) ds } + \int_s^t \mu_0(\kappa -\kappa_r) e^{ \int_v^t r(u) du } dv - X^{a^*}(t) + \frac{1}{\theta}  y e^{  \int_s^T \rho(u) du } e^{ -\int_t^T r(s) ds }.
\end{align}
\end{lemma}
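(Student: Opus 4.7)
The plan is to transform the claimed identity into an assertion that a certain process is a (deterministic) linear ODE solution, and then use the explicit form of the saddle point in \eqref{mmvPC.saddle} to check the cancellations. Concretely, I introduce
\[
Z(t) := \tfrac{1}{\theta}\, Y^{b^*}(t)\, e^{\int_t^T (\rho(u)-r(u))\,du}, \qquad G(t) := x\, e^{\int_s^t r(u)\,du} + \int_s^t \mu_0(\kappa-\kappa_r)\, e^{\int_v^t r(u)\,du}\,dv,
\]
and set $N(t) := Z(t) + X^{a^*}(t) - G(t)$. The identity to be proved is exactly $N(t) = \frac{1}{\theta}\, y\, e^{\int_s^T \rho(u)\,du}\, e^{-\int_t^T r(u)\,du}$, so it suffices to show that $N$ satisfies $dN(t) = r(t) N(t)\,dt$ with the right initial value.

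First I would compute the three differentials. A direct calculation gives $dG(t) = r(t) G(t)\, dt + \mu_0(\kappa-\kappa_r)\, dt$. For $Z(t)$, writing $f(t) = e^{\int_t^T (\rho(u)-r(u))\,du}$ so that $f'(t) = -(\rho(t)-r(t)) f(t)$, and using the SDE \eqref{mmvPC.dynamic.y} together with $Y^{b^*}>0$, Itô's product rule yields
\[
dZ(t) = Z(t)\Bigl[-(\rho(t)-r(t))\,dt + p^*(t)\,dW(t) + \int_{K} q^*(t,z)\,\widetilde{N}(dt,dz)\Bigr].
\]
Finally $dX^{a^*}(t)$ is read off from \eqref{mmvPC.dynamic.x} with $a = a^*$.

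The decisive step is to exploit the explicit saddle-point strategies in \eqref{mmvPC.saddle}. Observe that $\pi^*(t) = \frac{\mu(t)-r(t)}{\sigma(t)^2}\, Z(t)$ and $u^*(t) = \frac{\mu_0\kappa_r}{\sigma_0^2}\, Z(t)$, while $p^*(t) = -\frac{\mu(t)-r(t)}{\sigma(t)}$ and $q^*(t,z) = \frac{\mu_0\kappa_r z}{\sigma_0^2}$. Hence the Brownian coefficient in $dN$ is $Z(t) p^*(t) + \pi^*(t)\sigma(t) = 0$, and the Poisson coefficient is $Z(t) q^*(t,z) - u^*(t) z = 0$, so the martingale parts of $dZ$ and $dX^{a^*}$ cancel. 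For the drift, $\pi^*(t)(\mu(t)-r(t)) + \mu_0\kappa_r u^*(t) = Z(t)\bigl(\tfrac{(\mu(t)-r(t))^2}{\sigma(t)^2} + \tfrac{\mu_0^2\kappa_r^2}{\sigma_0^2}\bigr) = Z(t)\rho(t)$, which exactly cancels the $-Z(t)\rho(t)$ coming from $dZ$. What remains is $r(t) X^{a^*}(t) + Z(t) r(t) - r(t) G(t) = r(t) N(t)$.

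Thus $dN(t) = r(t) N(t)\,dt$, whose unique solution is $N(t) = N(s)\, e^{\int_s^t r(u)\,du}$. Since $X^{a^*}(s) = x = G(s)$ and $Y^{b^*}(s) = y$, the initial value is $N(s) = Z(s) = \frac{1}{\theta}\, y\, e^{\int_s^T (\rho(u)-r(u))\,du}$. A one-line rearrangement $e^{\int_s^T(\rho-r)du}\, e^{\int_s^t r\,du} = e^{\int_s^T \rho\, du}\, e^{-\int_t^T r\,du}$ gives the stated formula. I do not expect any real obstacle here beyond the algebraic bookkeeping; the whole point is that the saddle point was constructed so that these martingale and drift cancellations occur.
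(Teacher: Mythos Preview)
Your proposal is correct and is essentially the paper's computation repackaged: the paper writes out the Ornstein--Uhlenbeck solution for $X^{a^*}(t)$, substitutes $a^*$, and then uses integration by parts to collapse the stochastic integrals into $d\bigl(e^{\int_v^T \rho\,du}\,Y^{b^*}(v)\bigr)$, whereas you work in differential form by showing $dN(t)=r(t)N(t)\,dt$. The underlying cancellations---$\pi^*\sigma + Zp^* = 0$, $u^* z = Z q^*$, and $\pi^*(\mu-r)+\mu_0\kappa_r u^* = \rho Z$---are identical in both arguments, so there is no substantive difference.
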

\begin{proof}
\par Since $X^a$ is an Ornstein-Uhlenbeck process, it has an explicit solution as follows
\begin{align*}
X^{a^*}(t) =& x e^{ \int_s^t r(s) ds } + \int_s^t \mu_0(\kappa -\kappa_r) e^{ \int_v^t r(u) du } dv \\
&+ \frac{1}{\theta} e^{ -\int_t^T r(u) du } \bigg\{\int_s^t (\frac{\mu(v)-r(v)}{\sigma(v)})^2 e^{  \int_v^T \rho(u)du  } Y^{b^*}(v) dv + \int_s^t(\frac{\mu_0 \kappa_r }{\sigma_0} )^2 e^{  \int_v^T \rho(u)du  } Y^{b^*}(v) dv \\
&+ \int_s^t \frac{\mu(v)-r(v)}{\sigma(v)}e^{  \int_v^T \rho(u)du  } Y^{b^*}(v) dW(v) - \int_s^t \int_{K} \frac{\mu_0 \kappa_r z}{ \sigma_0^2} e^{  \int_v^T \rho(u)du  }  Y^{b^*}(v) \widetilde{N}(dv,dz) \bigg\} \\
=& x e^{ \int_s^t r(s) ds } + \int_s^t \mu_0(\kappa -\kappa_r) e^{ \int_v^t r(u) du } dv \\
&+ \frac{1}{\theta} e^{ -\int_t^T r(u) du } \bigg\{\int_s^t \rho(v) e^{  \int_v^T \rho(u)du  } Y^{b^*}(v) dv - \int_s^t e^{  \int_v^T \rho(u)du  } dY^{b^*}(v) \bigg\} \\
=& x e^{ \int_s^t r(s) ds } + \int_s^t \mu_0(\kappa -\kappa_r) e^{ \int_v^t r(u) du } dv \\
&- \frac{1}{\theta} e^{ -\int_t^T r(u) du } \bigg\{\int_s^t Y^{b^*}(v) de^{  \int_v^T \rho(u)du  } + \int_s^t e^{  \int_v^T \rho(u)du  } dY^{b^*}(v) \bigg\} \\ 
=& x e^{ \int_s^t r(s) ds } + \int_s^t \mu_0(\kappa -\kappa_r) e^{ \int_v^t r(u) du } dv - \frac{1}{\theta} e^{ -\int_t^T r(u) du } \bigg\{ Y^{b^*}(t) e^{  \int_t^T \rho(u)du  } - y e^{  \int_s^T \rho(u)du  } \bigg\}.
\end{align*}	
\end{proof}

\begin{theorem} \label{mmvPC.strategies.optimal}
\par If the initial values are $X^{a^*}(s) = x$ and $Y^{b^*}(s)=y$, the optimal strategies of Problem {\bf (MMV$_{\theta}$)} are given by
\begin{align}
\label{mmvPC.optimal_strategy}
\begin{cases}
\pi^*(t;s,x,y) =  \frac{\mu(t)-r(t)}{\sigma(t)^2} \biggl\{x e^{ \int_s^t r(s) ds } + \int_s^t \mu_0(\kappa -\kappa_r) e^{ \int_v^t r(u) du } dv - X^{a^*}(t) + \frac{1}{\theta}  y e^{  \int_s^T \rho(v)dv } e^{ -\int_t^T r(s) ds }\biggr\}, \\
u^*(t;s,x,y) = \frac{\mu_0 \kappa_r }{ \sigma_0^2} \biggl\{x e^{ \int_s^t r(s) ds } + \int_s^t \mu_0(\kappa -\kappa_r) e^{ \int_v^t r(u) du } dv - X^{a^*}(t) + \frac{1}{\theta}  y e^{  \int_s^T \rho(v)dv } e^{ -\int_t^T r(s) ds }\biggr\},
\end{cases}
\end{align}
and the value function of Problem {\bf (MMV$_{\theta}$)} is given by
\begin{align}
\nonumber \Phi_{\theta} =& \exp \left(\int_s^T r(v) dv \right) xy + \frac{1}{2 \theta} \exp \label{mmvPC.value_function_final} \left(\int_s^T \rho(v) dv \right) y^2\\ &+  \mu_0(\kappa - \kappa_r) y \int_s^T  \exp \left(\int_v^T r(u) du \right)dv - \frac{1}{2\theta}.
\end{align}
\end{theorem}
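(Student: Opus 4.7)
The plan is to assemble four preceding results into the final statement: the feedback saddle point $\hat a$ from Theorem \ref{mmvPC.theorem.valuefunction}; the verification (Theorem \ref{mmvPC.theorem.verification}) that closing the loop along $(X^{a^*},Y^{b^*})$ yields a Nash equilibrium of Problem (P$_{sxy}$) whose value equals $\phi(s,x,y)$; the pathwise identity of Lemma \ref{mmvPC.lemma.x.y} that trades $Y^{b^*}(t)$ for $X^{a^*}(t)$; and Lemma \ref{mmvPC.lemma.1} which transfers the Nash equilibrium of Problem (G) into the optimizer of Problem (MMV$_\theta$).

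First I would take the saddle-point feedback formula \eqref{mmvPC.saddle} and close the loop by evaluating it along the state trajectory, giving
\[
a^{*}(t;s,x,y) = \tfrac{1}{\theta}\Bigl(\tfrac{\mu(t)-r(t)}{\sigma(t)^{2}},\; \tfrac{\mu_{0}\kappa_{r}}{\sigma_{0}^{2}}\Bigr)\, \exp\!\Bigl(\int_{t}^{T}\rho(v)-r(v)\,dv\Bigr)\, Y^{b^{*}}(t).
\]
Then I would invoke Lemma \ref{mmvPC.lemma.x.y} to replace the factor $\frac{1}{\theta}Y^{b^{*}}(t)\exp(\int_{t}^{T}\rho(u)-r(u)\,du)$ by the explicit wealth-based expression on the right-hand side of \eqref{mmvPC.equation.wealth.P}. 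This substitution is purely algebraic and immediately yields both $\pi^{*}$ and $u^{*}$ in the form stated in \eqref{mmvPC.optimal_strategy}.

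For the value function, Lemma \ref{mmvPC.lemma.1}(c) identifies $\Phi_{\theta}$ with $I_{\theta}(a^{*}) = J_{\theta}(a^{*},Q^{*})$, where $Q^{*}$ corresponds, via Theorem \ref{mmvPC.theorem.Qcharacterization}, to the terminal density $dQ^{*}/dP = Y^{b^{*}}(T)$. Using $C(Q^{*}||P) = \mathbb{E}^{P}[Y^{b^{*}}(T)^{2}] - 1$, I would then compute
\[
\Phi_{\theta} = \mathbb{E}^{P}\!\bigl[X^{a^{*}}(T)\,Y^{b^{*}}(T)\bigr] + \tfrac{1}{2\theta}\mathbb{E}^{P}\!\bigl[Y^{b^{*}}(T)^{2}\bigr] - \tfrac{1}{2\theta} = J^{a^{*},b^{*}}(s,x,y) - \tfrac{1}{2\theta} = \phi(s,x,y) - \tfrac{1}{2\theta},
\]
and expand $\phi(s,x,y)$ using \eqref{mmvPC.value.function} to land on \eqref{mmvPC.value_function_final}.

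The main subtlety I anticipate is the bookkeeping around the $-\tfrac{1}{2\theta}$ offset: the auxiliary objective $J^{a,b}$ is written in terms of $\mathbb{E}^{P}[Y(T)^{2}]$ rather than the relative Gini index $C(Q||P) = \mathbb{E}^{P}[(dQ/dP)^{2}] - 1$, so one must track this $-1$ carefully through the correspondence of Theorem \ref{mmvPC.theorem.Qcharacterization} in order to convert the auxiliary value $\phi(s,x,y)$ into the true MMV value $\Phi_{\theta}$. Apart from that, the argument is a direct substitution once the closed-loop form of $\hat a$ and the identity of Lemma \ref{mmvPC.lemma.x.y} are in hand.
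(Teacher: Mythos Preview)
Your proposal is correct and matches the paper's approach: the paper presents this theorem as the assembly of exactly the four ingredients you name (the feedback saddle point \eqref{mmvPC.saddle}, the verification Theorem \ref{mmvPC.theorem.verification}, the pathwise identity of Lemma \ref{mmvPC.lemma.x.y}, and the equivalence chain through Lemma \ref{mmvPC.lemma.1}), and the $-\tfrac{1}{2\theta}$ offset is precisely the bookkeeping point you flagged. The only minor addition worth making explicit is that the passage from the Nash equilibrium of Problem {\bf (P$_{sxy}$)} to that of Problem {\bf (G)} formally goes through the intermediate Problem {\bf (D)} via Propositions \ref{mmvPC.proposition.2} and \ref{mmvPC.proposition.3} in the appendix, not just Theorem \ref{mmvPC.theorem.Qcharacterization}; but this is exactly the correspondence you are invoking.
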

{\color{blue}
	\begin{remark}
		By taking the conditional expectation $ \mathbb{E}_{s,x,y}^P := \mathbb{E}^P[\cdot|X(s)=x,Y(s)=y]$ on \eqref{mmvPC.equation.wealth.P}, we have
		\begin{align*}
		\mathbb{E}_{s,x,y}^PX^{a^*}(t) + \frac{1}{\theta}   y e^{  \int_t^T \rho(u) - r(u)du  } = x e^{ \int_s^t r(s) ds } + \int_s^t \mu_0(\kappa -\kappa_r) e^{ \int_v^t r(u) du } dv + \frac{1}{\theta}  y e^{  \int_s^T \rho(u) du } e^{ -\int_t^T r(s) ds }.
		\end{align*}
		Then the optimal strategies in \eqref{mmvPC.optimal_strategy} can be written as
		\begin{align*}
		\begin{cases}
		\pi^*(t;s,x,y) =  \frac{\mu(t)-r(t)}{\sigma(t)^2} \biggl\{\mathbb{E}_{s,x,y}^PX^{a^*}(t) + \frac{1}{\theta}   y e^{  \int_t^T \rho(u) - r(u)du  } - X^{a^*}(t) \biggr\}, \\
		u^*(t;s,x,y) = \frac{\mu_0 \kappa_r }{ \sigma_0^2} \biggl\{\mathbb{E}_{s,x,y}^PX^{a^*}(t) + \frac{1}{\theta}   y e^{  \int_t^T \rho(u) - r(u)du  } - X^{a^*}(t) \biggr\}.
		\end{cases}
		\end{align*}
		Here $\frac{\mu(t)-r(t)}{\sigma(t)^2}$ and $\frac{\mu_0 \kappa_r }{ \sigma_0^2}$ are scale factors called the market price of risk. The optimal strategies have the following economic explanation: The insurer has a benchmark $\mathbb{E}_{s,x,y}^PX^{a^*}(t) + \frac{1}{\theta}   y e^{  \int_t^T \rho(u) - r(u)du  }$ which is the expected wealth $\mathbb{E}_{s,x,y}^PX^{a^*}(t)$ plus an additional anticipated return $\frac{1}{\theta}   y e^{  \int_t^T \rho(u) - r(u)du  }$. The larger the parameter of risk aversion $\theta$, the smaller the additional anticipated return. The insurer would compare the current wealth $X^{a^*}(t)$ to the benchmark, and choose its strategies according to the difference modified by the scale factors. If the current wealth is smaller than the benchmark, the insurer would take a more risky strategy and if the current wealth is large, the insurer would be more conservative.
	\end{remark}
}

{\color{red}
\begin{remark}
	For the no-jump case, where the claim process is no longer a compound Poisson process but are assumed to satisfies the dynamics
	\begin{equation}
	\label{mmvPC.diffusion_approximation}
	d\widetilde{C}(t) := \int_{K} z v(dz) dt - \int_{K} z^2 v(dz) dW_0(t),
	\end{equation}
	where $W_0(t)$ is a standard Brownian motion independent of $W(t)$, and $v(\cdot)$ is the L\'{e}vy measure defined in Subsection \ref{mmvPC.section.market}, it can be calculated that for the new problem the value functions is also \eqref{mmvPC.value_function_final} (see \cite{li2021optimal})). In this case, \eqref{mmvPC.diffusion_approximation} is called the diffusion approximation of the compound Poisson process $C(t) = \int_0^t \int_{K} z N(ds,dz)$.
\end{remark}
}

\section{Efficient frontier}



\par We now discuss the relation between the variance and the mean of the terminal wealth process for varying $\theta \in [0,+\infty)$. Denote $a^{\theta}$ as the optimal strategy for Problem {\bf (MMV$_{\theta}$)}. If we choose a strategy $a(t)  \equiv (0,0)$, then by substituting it into SDE \eqref{mmvPC.wealth}, we can get the riskless wealth of the insurance company:
\begin{equation}
	\label{mmvPC.x0}
	X^{(0,0)}(t) =  x_0 e^{ \int_0^t r(s) ds } + \int_0^t \mu_0(\kappa -\kappa_r) e^{ \int_s^t r(u) du } ds.
\end{equation}
Here, we denote $X^{(0,0)} = X^{(0,0)}(T)$.

\begin{theorem} \label{mmvPC.theorem.efficientfrontier}
\par If we let the parameter $\theta \in [0,+\infty)$ be arbitrary, then the variance of the terminal wealth process can be written as a functional of the expectation as follows
\begin{equation*}
	\mathbb{V}ar^PX(T) =
	\begin{cases}
		 \frac{ \exp(- \int_0^T \rho(s) ds) }{1 -\exp(- \int_0^T \rho(s) ds)}\left(\mathbb{E}^PX(T)- X^{(0,0)}  \right)^2, &\text{ if  } \quad \mathbb{E}^PX(T) > X^{(0,0)}, \\
		 0, &\text{ if  } \quad	\mathbb{E}^PX(T) \leq X^{(0,0)},
	\end{cases}
\end{equation*}
where $\rho(t) = \frac{\mu_0^2 \kappa_r^2}{\sigma_0^2} + b(t)^T \Sigma(t)^{-1} b(t)$. The corresponding strategy is given by
\begin{equation*}
a^{\theta}=
\begin{cases}
(\pi^*,u^*) \text{ in Corollary \ref{mmvPC.strategies.optimal}}, &\text{ if  } \quad \mathbb{E}^PX(T) > X^{(0,0)}, \\
(0,0), &\text{ if  } \quad \mathbb{E}^PX(T) \leq X^{(0,0)}.
\end{cases}
\end{equation*}
\end{theorem}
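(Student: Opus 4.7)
\noindent\textbf{Proof proposal for Theorem \ref{mmvPC.theorem.efficientfrontier}.}

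The plan is to explicitly compute the mean and the variance of the optimal terminal wealth produced by Theorem \ref{mmvPC.theorem.verification}, then eliminate the parameter $\theta$ to obtain the claimed functional relation. Throughout, I instantiate Lemma \ref{mmvPC.lemma.x.y} with $s=0$, $x=x_0$, $y=1$, $t=T$. Recognizing that $x_0 e^{\int_0^T r(s)ds}+\int_0^T\mu_0(\kappa-\kappa_r)e^{\int_v^T r(u)du}dv = X^{(0,0)}$, the identity in Lemma \ref{mmvPC.lemma.x.y} collapses to
\begin{equation*}
X^{a^{\theta}}(T) \;=\; X^{(0,0)} \;+\; \tfrac{1}{\theta}\bigl(A - Y^{b^{\theta}}(T)\bigr), \qquad A := \exp\!\Bigl(\int_0^T \rho(s)\,ds\Bigr).
\end{equation*}
This single relation turns the computation of moments of $X^{a^\theta}(T)$ into the computation of moments of $Y^{b^\theta}(T)$.

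First, since $Y^{b^\theta}$ is an $\mathbb{F}$-martingale under $P$ with $\mathbb{E}^P Y^{b^\theta}(T)=1$ (by Definition \ref{mmvPC.definition.4} and the admissibility established in Lemma \ref{mmvPC.lemma.admissible.check}), taking $P$-expectation yields
\begin{equation*}
\mathbb{E}^P X^{a^{\theta}}(T) - X^{(0,0)} \;=\; \tfrac{1}{\theta}(A-1) \;\ge\; 0.
\end{equation*}
Next, I would compute $\mathbb{E}^P Y^{b^\theta}(T)^2$ by applying It\^o's formula to $y\mapsto y^2$ along the SDE \eqref{mmvPC.dynamic.y} with the saddle-point controls $\hat p(t)=-(\mu(t)-r(t))/\sigma(t)$ and $\hat q(t,z)=\mu_0\kappa_r z/\sigma_0^2$. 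The local-martingale parts vanish in expectation (using boundedness of $\hat p,\hat q$ and square-integrability of $Y^{b^\theta}$), leaving
\begin{equation*}
f(t) := \mathbb{E}^P Y^{b^\theta}(t)^2 \;=\; 1 + \int_0^t f(u)\Bigl(\hat p(u)^2 + \int_K \hat q(u,z)^2 v(dz)\Bigr)du \;=\; 1 + \int_0^t \rho(u)f(u)\,du,
\end{equation*}
using $\hat p^2+\int_K\hat q^2 v(dz)=(\mu-r)^2/\sigma^2+\mu_0^2\kappa_r^2/\sigma_0^2=\rho$. Solving this linear ODE gives $\mathbb{E}^P Y^{b^\theta}(T)^2 = A$, so $\mathbb{V}ar^P Y^{b^\theta}(T)=A-1$ and therefore
\begin{equation*}
\mathbb{V}ar^P X^{a^{\theta}}(T) \;=\; \tfrac{1}{\theta^2}\mathbb{V}ar^P Y^{b^\theta}(T) \;=\; \tfrac{A-1}{\theta^2}.
\end{equation*}

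Eliminating $\theta$ between the two displays gives, for $\theta\in(0,\infty)$,
\begin{equation*}
\mathbb{V}ar^P X^{a^{\theta}}(T) \;=\; \tfrac{1}{A-1}\bigl(\mathbb{E}^P X^{a^{\theta}}(T) - X^{(0,0)}\bigr)^2 \;=\; \tfrac{A^{-1}}{1-A^{-1}}\bigl(\mathbb{E}^P X^{a^{\theta}}(T) - X^{(0,0)}\bigr)^2,
\end{equation*}
which is the stated formula. As $\theta$ sweeps $(0,\infty)$, $\mathbb{E}^P X^{a^\theta}(T)$ sweeps $(X^{(0,0)},\infty)$, so this covers the first branch. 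For the degenerate case $\theta=\infty$, the optimal control collapses to $(\pi,u)\equiv(0,0)$ by \eqref{mmvPC.saddle}, giving the deterministic terminal wealth $X^{(0,0)}$, hence zero variance and $\mathbb{E}^P X(T)=X^{(0,0)}$; any target mean strictly below $X^{(0,0)}$ cannot improve upon this on the monotone efficient frontier, so $(0,0)$ is selected and the variance remains $0$, justifying the second branch.

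The routine pieces are the arithmetic of the final elimination step and the closed-form solution of $X^{a^\theta}$; the main technical point to handle carefully is the It\^o computation of $\mathbb{E}^P Y^{b^\theta}(T)^2$, because $Y^{b^\theta}$ has jumps driven by $\widetilde N$ and one must verify that the stochastic integrals against $dW$ and $d\widetilde N$ are true martingales (not merely local) before taking expectations. This is ensured by the explicit bound on $\hat p,\hat q$ and the square-integrability of $Y^{b^\theta}$ already guaranteed by admissibility, so a localization/dominated convergence argument identical to the one in the proof of Theorem \ref{mmvPC.theorem.verification} closes the gap.
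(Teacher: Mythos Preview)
Your computation for the first branch $\mathbb{E}^PX(T)>X^{(0,0)}$ is correct and essentially identical to the paper's: the paper packages the same steps into Lemma \ref{mmvPC.lemma.d1} (the It\^o computation of $\mathbb{E}^P(Y^{b^*}(t))^2$) and Lemma \ref{mmvPC.lemma.Var.E.relationship} (the elimination of $\theta$ via Lemma \ref{mmvPC.lemma.x.y}), and your argument reproduces both.

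Where your proposal differs is in the second branch $\mathbb{E}^PX(T)\le X^{(0,0)}$. You dispose of it by a limiting/Pareto-dominance heuristic (``$\theta=\infty$ collapses to $(0,0)$ and smaller means cannot improve''). The paper instead re-poses Problem {\bf (P$_{0x_01}$)} subject to the linear constraint $\mathbb{E}^P\bigl[\int_0^T e^{\int_s^T r}\pi(\mu-r)\,ds+\int_0^T \mu_0\kappa_r e^{\int_s^T r}u\,ds\bigr]\le 0$ (which is shown to be equivalent to $\mathbb{E}^PX^a(T)\le X^{(0,0)}$) and \emph{verifies directly} that $(a,b)=((0,0),(0,0))$ is a Nash equilibrium of this constrained game: $J^{0,b}(0,x_0,1)\ge J^{0,0}(0,x_0,1)$ for every admissible $b$, and $J^{a,0}(0,x_0,1)\le J^{0,0}(0,x_0,1)$ for every admissible $a$ satisfying the constraint. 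This is what licenses, via Lemma \ref{mmvPC.lemma.1}, the assertion that $(0,0)$ is the MMV-optimal strategy in that regime and hence that the efficient frontier degenerates to the single point $(0,X^{(0,0)})$.

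Your heuristic captures the right intuition but does not substitute for this verification: $\theta=\infty$ lies outside the stated parameter range, and the claim that targeting means below $X^{(0,0)}$ ``selects $(0,0)$'' is exactly what needs proving within the MMV framework. To complete your proof, either reproduce the saddle-point check above or argue carefully that the constrained maximizer of $I_\theta$ over $\{a:\mathbb{E}^PX^a(T)\le X^{(0,0)}\}$ is the zero strategy for every $\theta\in(0,\infty)$.
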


{\color{magenta}
\begin{remark}
\par The expression in Theorem \ref{mmvPC.theorem.efficientfrontier} coincides with the efficient frontier of the classical MV model (see Theorem 8 of \cite{bi2014dynamic}), which means the optimal strategies with different $\theta$ in this paper also possess the property of the efficient strategies for the classical MV model. Specifically, it is well known that the efficient strategy of the classical MV model is an ``optimal" strategy, which minimizes the variance while ensuring that the mean is fixed, and for our MMV problem, if we fix $\mathbb{E}^PX(T) := \xi \geq X^{(0,0)}$, its optimal strategy is \eqref{mmvPC.optimal_strategy} with
\begin{equation*}
\theta = \frac{\exp(\int_0^T \rho(s)ds) - 1}{\xi-X^{(0,0)}},
\end{equation*}
and, under this strategy, the variance $\mathbb{V}ar^PX(T)$ indeed attains its minimum $\frac{ \exp(- \int_0^T \rho(s) ds) }{1 -\exp(- \int_0^T \rho(s) ds)}\left(\xi- X^{(0,0)}  \right)^2$. In view of this, we call
\begin{equation*}
\label{mmvPC.ef.1}
\begin{cases}
\left( \frac{ \exp(- \int_0^T \rho(s) ds) }{1 -\exp(- \int_0^T \rho(s) ds)}\left(\mathbb{E}^PX(T)- X^{(0,0)}  \right)^2, \mathbb{E}^PX(T) \right), &\text{ if  } \quad \mathbb{E}^PX(T) > X^{(0,0)}, \\
(0, X^{(0,0)}), &\text{ if  } \quad	\mathbb{E}^PX(T) \leq X^{(0,0)},
\end{cases}
\end{equation*}
the efficient frontier and \eqref{mmvPC.optimal_strategy} with $\theta := \frac{\exp(\int_0^T \rho(s)ds) - 1}{\xi-X^{(0,0)}}$ the efficient strategies for the MMV problem.
\end{remark}
}

\begin{remark}
	\par Let $\mathbb{E}^PX^{a^{\theta}}(T) = \xi$, then by equality \eqref{mmvPC.EPx}, we have
	\begin{equation}
	\label{mmvPC.theta}
	-\frac{1}{\theta} = \frac{x_0 \exp(\int_0^T r(s) ds) + \mu_0 (\kappa - \kappa_r) \int_0^T \exp(\int_s^Tr(v)dv) ds-\xi}{\exp(\int_0^T \rho(s)ds) - 1},
	\end{equation}
	which agrees with $\beta^*$ in Theorem 8 of \cite{bi2014dynamic}. Since, in \cite{bi2014dynamic}, $\beta^*$ is the Lagrange multiplier in the variance-minimization problem, $-\frac{1}{\beta^*}$ plays the role of risk aversion coefficient.
\end{remark}

\par We first give some lemmas before proving Theorem \ref{mmvPC.theorem.efficientfrontier}.

\begin{lemma} \label{mmvPC.lemma.d1}
	\par If $X(0) = x_0$, $Y(0) = 1$, we have, for the optimal strategies $a^*$, $b^*$ and $\frac{dQ^*}{dP} \big|_{\mathcal{F}_t} = Y^{a^*}(t)$, 
	\begin{align}
	\label{mmvPC.EPx} &\mathbb{E}^PX^{a^*}(t) =   x_0 e^{ \int_0^t r(u) du } + \int_0^t \mu_0(\kappa -\kappa_r) e^{ \int_s^t r(u) du } ds + \frac{1}{\theta} e^{-\int_t^T r(s) ds } \biggl\{  e^{ \int_0^T \rho(s) ds } - e^{ \int_t^T \rho(s) ds }\biggr\}, \\
	\label{mmvPC.EQy} &\mathbb{E}^{Q^*} \bigg[ \frac{dQ^*}{dP}\bigg|_{\mathcal{F}_t} \bigg] \equiv \mathbb{E}^P \bigg[ \frac{dQ^*}{dP}\bigg|_{\mathcal{F}_t} \bigg]^2 \equiv \mathbb{E}^P (Y^{b^*}(t))^2 = \exp \left( \int_0^t \rho(s) ds \right).
	\end{align}
\end{lemma}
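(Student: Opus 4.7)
The plan is to prove the two identities in turn, reducing each to a routine computation that uses the saddle-point strategy \eqref{mmvPC.saddle} from Theorem \ref{mmvPC.theorem.valuefunction} together with the representation \eqref{mmvPC.equation.wealth.P} established in Lemma \ref{mmvPC.lemma.x.y}.

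I would tackle \eqref{mmvPC.EQy} first, since \eqref{mmvPC.EPx} depends on it (indirectly, through the fact that $Y^{b^*}$ is a true $P$-martingale with mean one). The second moment of $Y^{b^*}(t)$ is handled by applying It\^o's formula to $(Y^{b^*}(t))^2$ using the SDE \eqref{mmvPC.dynamic.y} with the specific controls $p^*(t) = -(\mu(t)-r(t))/\sigma(t)$ and $q^*(t,z) = \mu_0\kappa_r z/\sigma_0^2$ from \eqref{mmvPC.saddle}. The quadratic variation contribution from the Brownian part is $(p^*(t))^2 (Y^{b^*}(t))^2 = \frac{(\mu(t)-r(t))^2}{\sigma(t)^2}(Y^{b^*}(t))^2$, while the compensator of the jump part contributes $(Y^{b^*}(t-))^2 \int_K (q^*(t,z))^2 v(dz) = \frac{\mu_0^2\kappa_r^2}{\sigma_0^4}(Y^{b^*}(t-))^2\int_K z^2 v(dz) = \frac{\mu_0^2\kappa_r^2}{\sigma_0^2}(Y^{b^*}(t-))^2$; the two pieces sum to $\rho(t)(Y^{b^*}(t-))^2$. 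Since Lemma \ref{mmvPC.lemma.admissible.check} guarantees $Y^{b^*}$ is a square-integrable martingale, the local-martingale parts integrate to zero in expectation, so $f(t) := \mathbb{E}^P(Y^{b^*}(t))^2$ satisfies $f'(t) = \rho(t) f(t)$ with $f(0) = 1$, yielding $f(t) = \exp(\int_0^t \rho(s)\,ds)$. To conclude the chain of equalities in \eqref{mmvPC.EQy}, I would invoke the tower property: since $Y^{b^*}(T) = dQ^*/dP$ and $Y^{b^*}(t) = \mathbb{E}^P[Y^{b^*}(T)\mid \mathcal{F}_t]$, one has
\begin{equation*}
\mathbb{E}^{Q^*}\!\bigl[\tfrac{dQ^*}{dP}\big|_{\mathcal{F}_t}\bigr] = \mathbb{E}^P\!\bigl[Y^{b^*}(T)\,Y^{b^*}(t)\bigr] = \mathbb{E}^P\!\bigl[Y^{b^*}(t)\,\mathbb{E}^P[Y^{b^*}(T)\mid\mathcal{F}_t]\bigr] = \mathbb{E}^P(Y^{b^*}(t))^2.
\end{equation*}

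For \eqref{mmvPC.EPx} I would simply take $\mathbb{E}^P$ on both sides of \eqref{mmvPC.equation.wealth.P} specialized to $s=0$, $x=x_0$, $y=1$. By the martingale property of $Y^{b^*}$ (from admissibility in Definition \ref{mmvPC.definition.4}), $\mathbb{E}^P Y^{b^*}(t) = 1$, so the left side collapses to $\tfrac{1}{\theta}\exp(\int_t^T(\rho(u)-r(u))\,du)$. Moving $\mathbb{E}^P X^{a^*}(t)$ to the other side and grouping the two $\tfrac{1}{\theta}$ terms by factoring out $e^{-\int_t^T r(s)\,ds}$ yields exactly \eqref{mmvPC.EPx}.

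The only non-routine step is the justification that the stochastic-integral parts vanish in expectation when computing $\mathbb{E}^P(Y^{b^*}(t))^2$; this is where the square-integrability of $Y^{b^*}$ (established in Lemma \ref{mmvPC.lemma.admissible.check}) together with the boundedness and deterministic nature of the saddle-point controls $p^*$ and $q^*$ are essential, so that the Brownian and compensated-Poisson stochastic integrals are true $P$-martingales rather than merely local ones. Everything else is algebraic simplification.
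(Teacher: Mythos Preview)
Your proposal is correct and follows essentially the same approach as the paper: the paper's proof simply says that \eqref{mmvPC.EPx} follows by taking $P$-expectation on both sides of \eqref{mmvPC.equation.wealth.P} and that \eqref{mmvPC.EQy} follows by applying It\^o's lemma to $Y(t)^2$. You have merely supplied the details (the explicit drift computation giving the ODE $f'=\rho f$, the tower-property justification for the first chain of equalities, and the square-integrability argument for the martingale parts), none of which departs from the paper's intended route.
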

\begin{proof}
	\par \eqref{mmvPC.EPx} is obvious by taking expectation on both sides of \eqref{mmvPC.equation.wealth.P} under $P$. \eqref{mmvPC.EQy} is obtained by applying It{\^o}'s Lemma to $Y(t)^2$.
\end{proof}

\begin{lemma} \label{mmvPC.lemma.Var.E.relationship}
	\par For any fixed $\theta \in [0,+\infty)$, the variance of the wealth process for the optimal strategy $u^* $ is as follows
	\begin{equation}
	\label{mmvPC.var.1} \mathbb{V}ar^PX^{a^*}(t) = \frac{ \exp(- \int_0^t \rho(s) ds) }{1 -\exp(- \int_0^t \rho(s) ds)}\bigg(\mathbb{E}^PX^{a^*}(t)- \mathbb{E}^{Q^*}X(t)  \bigg)^2,
	\end{equation}
	with $Q^*$ defined by $\frac{dQ^*}{dP} \big|_{\mathcal{F}_t} = Y^{a^*}(t)$ and
	\begin{equation}
	\label{mmvPC.EQx} \mathbb{E}^{Q^*}X(t) = x_0 e^{ \int_0^t r(s) ds } + \int_0^t \mu_0(\kappa -\kappa_r) e^{ \int_s^t r(u) du } ds \equiv X^{(0,0)}(t).
	\end{equation}
\end{lemma}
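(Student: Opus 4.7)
The plan is to leverage Lemma \ref{mmvPC.lemma.x.y} to express $X^{a^*}(t)$ as a deterministic shift plus a linear function of $Y^{b^*}(t)$, and then compute the three quantities $\mathbb{E}^{Q^*}X(t)$, $\mathbb{E}^P X^{a^*}(t)$, and $\mathbb{V}ar^P X^{a^*}(t)$ using the moments of $Y^{b^*}(t)$ furnished by Lemma \ref{mmvPC.lemma.d1}.

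Concretely, specializing Lemma \ref{mmvPC.lemma.x.y} to $s=0$, $x=x_0$, $y=1$, I would rearrange \eqref{mmvPC.equation.wealth.P} into the form
\begin{equation*}
X^{a^*}(t) = X^{(0,0)}(t) + \frac{1}{\theta} e^{-\int_t^T r(u)\,du} e^{\int_0^T \rho(u)\,du} - \frac{1}{\theta} e^{\int_t^T (\rho(u)-r(u))\,du}\, Y^{b^*}(t),
\end{equation*}
where $X^{(0,0)}(t)$ is exactly the riskless wealth in \eqref{mmvPC.x0}. Only the last term is random, so both the mean and variance of $X^{a^*}(t)$ reduce to those of $Y^{b^*}(t)$.

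For \eqref{mmvPC.EQx}, I would compute $\mathbb{E}^{Q^*}X(t) = \mathbb{E}^P[Y^{b^*}(t) X^{a^*}(t)]$ by plugging in the representation above and invoking $\mathbb{E}^P Y^{b^*}(t)=1$ and $\mathbb{E}^P[Y^{b^*}(t)^2] = e^{\int_0^t \rho(s)\,ds}$ from Lemma \ref{mmvPC.lemma.d1}. The two non-$X^{(0,0)}$ terms combine to
\begin{equation*}
\frac{1}{\theta} e^{-\int_t^T r(u)\,du}\left(e^{\int_0^T \rho(u)\,du} - e^{\int_t^T \rho(u)\,du}\,e^{\int_0^t \rho(s)\,ds}\right) = 0,
\end{equation*}
since $\int_t^T \rho(u)\,du + \int_0^t \rho(s)\,ds = \int_0^T \rho(u)\,du$. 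This yields $\mathbb{E}^{Q^*}X(t)=X^{(0,0)}(t)$.

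For \eqref{mmvPC.var.1}, I would first observe that
\begin{equation*}
\mathbb{V}ar^P X^{a^*}(t) = \frac{1}{\theta^2} e^{2\int_t^T (\rho(u)-r(u))\,du}\,\mathbb{V}ar^P Y^{b^*}(t) = \frac{1}{\theta^2} e^{2\int_t^T (\rho(u)-r(u))\,du}\bigl(e^{\int_0^t \rho(s)\,ds}-1\bigr),
\end{equation*}
and secondly that, using \eqref{mmvPC.EPx} together with the just-proved $\mathbb{E}^{Q^*}X(t)=X^{(0,0)}(t)$,
\begin{equation*}
\mathbb{E}^P X^{a^*}(t) - \mathbb{E}^{Q^*}X(t) = \frac{1}{\theta} e^{\int_t^T (\rho(u)-r(u))\,du}\bigl(e^{\int_0^t \rho(s)\,ds}-1\bigr).
\end{equation*}
Squaring the second identity and dividing the two expressions, the factor $e^{2\int_t^T (\rho(u)-r(u))\,du}/\theta^2$ cancels and I am left with the stated ratio $e^{-\int_0^t \rho(s)\,ds}/(1-e^{-\int_0^t \rho(s)\,ds})$. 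There is no real obstacle here beyond bookkeeping of the exponential factors; the nontrivial content is packaged into the pathwise identity \eqref{mmvPC.equation.wealth.P} and the two moment evaluations in Lemma \ref{mmvPC.lemma.d1}, and once those are in hand the computation is almost mechanical.
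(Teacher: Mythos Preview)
Your proposal is correct and follows essentially the same approach as the paper: both arguments rearrange the pathwise identity \eqref{mmvPC.equation.wealth.P} from Lemma \ref{mmvPC.lemma.x.y} (specialized to $s=0$, $x=x_0$, $y=1$) to write $X^{a^*}(t)$ as a deterministic quantity minus a constant multiple of $Y^{b^*}(t)$, and then reduce the mean and variance computations to the moments $\mathbb{E}^P Y^{b^*}(t)=1$ and $\mathbb{E}^P[Y^{b^*}(t)^2]=e^{\int_0^t\rho(s)\,ds}$ from Lemma \ref{mmvPC.lemma.d1}. The only cosmetic difference is that you compute $\mathbb{E}^{Q^*}X(t)$ explicitly via $\mathbb{E}^P[Y^{b^*}(t)X^{a^*}(t)]$ before forming the difference, whereas the paper computes $\mathbb{E}^PX^{a^*}(t)-\mathbb{E}^{Q^*}X(t)$ in one step by taking $P$- and $Q^*$-expectations of \eqref{mmvPC.equation.wealth.P} and subtracting; the content is identical.
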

\begin{proof}
	\par By Theorem \ref{mmvPC.lemma.x.y} and \eqref{mmvPC.EQy}, we have
	\begin{align*}
	\mathbb{V}ar^PX^{a^*}(t) =& \frac{1}{\theta^2}  e^{  2 \int_t^T( \rho(s) - r(s))ds  } \mathbb{V}ar^P  Y^{b^*}(t) \\
	=& \frac{1}{\theta^2}  e^{  2 \int_t^T( \rho(s) - r(s))ds  } \bigg(\mathbb{E}^P  Y^{b^*}(t)^2 - (\mathbb{E}^P Y^{b^*}(t))^2\bigg) \\
	=& \frac{1}{\theta^2}  e^{  2 \int_t^T( \rho(s) - r(s))ds  } \bigg(\mathbb{E}^{Q^*}  Y^{b^*}(t) - \mathbb{E}^P Y^{b^*}(t)\bigg) \\
	=& \frac{1}{\theta^2}  e^{  2 \int_t^T( \rho(s) - r(s))ds  } (e^{ \int_0^t \rho(s) ds } - 1 ),
	\end{align*}
	and
	\begin{align*}
	\mathbb{E}^PX^{a^*}(t) - \mathbb{E}^{Q^*}X(t) = \frac{1}{\theta} (e^{ \int_0^t \rho(s) ds } - 1 ) e^{ \int_t^T (\rho(s) -r(s)) ds }.
	\end{align*}
	
	\par Hence
	\begin{align*}
	\mathbb{V}ar^PX^{a^*}(t) = \frac{1}{e^{ \int_0^t \rho(s) ds } - 1 } \bigg(\mathbb{E}^PX^{a^*}(t)- \mathbb{E}^{Q^*}X(t)  \bigg)^2,
	\end{align*}
	which proves \eqref{mmvPC.var.1}.
\end{proof}

\par Proof of Theorem \ref{mmvPC.theorem.efficientfrontier} is as follows.
\begin{proof}
	\par By taking expectation of \eqref{mmvPC.dynamic.x} and comparing it with \eqref{mmvPC.x0}, we have $\mathbb{E}^PX^a(T) \leq X^{(0,0)}$ if and only if
	\begin{equation}
	\label{mmvPC.constraint}
	\mathbb{E}^P \bigg[ \int_0^T e^{ \int_s^T r(u) du } \pi(s)  (\mu(s)-r(s)) ds + \int_0^T \mu_0 \kappa_r e^{ \int_s^T r(u) du } u(s) ds \bigg] \leq 0.
	\end{equation}
	For the case $\mathbb{E}^PX^a(T) \leq X^{(0,0)}$, we claim that $(a^{\theta} , b^{\theta} ) = (\vec{0}, \vec{0})$ is the Nash equilibrium of Problem {\bf (P$_{0x_01}$)} under constraint \eqref{mmvPC.constraint}. We obtain
	\begin{equation*}
	Y^0(t) \equiv 1, \quad \forall t \in[0,T],
	\end{equation*}
	which gives
	\begin{equation*}
	J^{0,0}(0,x_0,1) = \mathbb{E}^P\bigg[X^{(0,0)}(T)Y^0(T)+\frac{1}{2\theta}(Y^0(T))^2\bigg] = X^{(0,0)}(T) + \frac{1}{2\theta},
	\end{equation*}
	where $X^{(0,0)}(T)$ is deterministic that is given by \eqref{mmvPC.x0}. For any $b  \in \mathcal{B}[0,T]$,
	\begin{align*}
	J^{0,b}(0,x_0,1) =& \mathbb{E}^P\bigg[X^{(0,0)}(T)Y^b(T)+\frac{1}{2\theta}(Y^b(T))^2\bigg] \\
	=& X^{(0,0)}(T) + \frac{1}{2\theta} \exp \left( \int_0^t \bigg[p(s)^2 + \int_{K}  q(s,z)^2 v(dz) \bigg]  ds \right) \\
	\geq& X^{(0,0)}(T) + \frac{1}{2\theta} = J^{0,0}(0,x_0,1).
	\end{align*}
	For any $a  \in \mathcal{A}[0,T]$ satisfying the constraint \eqref{mmvPC.constraint},
	\begin{align*}
	J^{a,0}(0,x_0,1) =& \mathbb{E}^P[X^a(T)Y^0(T)+\frac{1}{2\theta}(Y^0(T))^2] \\
	=& \mathbb{E}^P\bigg[X^a(T)\bigg]+\frac{1}{2\theta} \\
	=& X^{(0,0)}(T) + \frac{1}{2\theta} \\
	&+ \mathbb{E}^P \bigg[ \int_0^T e^{ \int_s^T r(u) du } \pi(s)  (\mu(s)-r(s)) ds + \int_0^T \mu_0 \kappa_r e^{ \int_s^T r(u) du } u(s) ds \bigg] \\
	\leq& X^{(0,0)}(T) + \frac{1}{2\theta} = J^{0,0}(0,x_0,1).
	\end{align*}
	By Lemma \ref{mmvPC.lemma.1}, for the case $\mathbb{E}^PX(T) \leq X^{(0,0)}$, the efficient strategy is $a^{\theta} = (0,0)$. Noting that $\mathbb{V}ar^PX^{(0,0)}(T) = 0$ and $\mathbb{E}^PX^{(0,0)}(T) = X^{(0,0)}(T) \equiv X^{(0,0)}$, the efficient frontier is the point $(0, X^{(0,0)})$. On the other hand,  since the optimal strategy $a^*(t) > 0$, for $\forall t \in [0,T]$ and
	\begin{equation*}
	\mathbb{E}^P \bigg[ \int_0^T e^{ \int_s^T r(u) du } \pi^*(s)  (\mu(s)-r(s)) ds + \int_0^T \mu_0 \kappa_r e^{ \int_s^T r(u) du } u^*(s) ds \bigg] > 0,
	\end{equation*}
	we have $\mathbb{E}^PX^{a^*}(T) > X^{(0,0)}$. For this case, the efficient strategy is given by $a^{\theta}=(\pi^*,u^*)$ in Theorem \ref{mmvPC.strategies.optimal}. The efficient frontier is given by \eqref{mmvPC.var.1}.
\end{proof}

\begin{corollary} \label{mmvPC.corollary.3}
	\par If there is no investment in the model, i.e., $\mu = \sigma = r = 0$, the efficient frontier is
	\begin{equation*}
		\label{mmvPC.ef.2}
		\begin{cases}
			\left( \frac{ \exp(- \int_0^T \rho(s) ds) }{1 -\exp(- \int_0^T \rho(s) ds)}\left(\mathbb{E}^PX(T)- X^{(0,0)}_{niv}  \right)^2, \mathbb{E}^PX(T) \right), &\text{ if }  \mathbb{E}^PX(T) > X^{(0,0)}_{niv}, \\
			(0, X^{(0,0)}_{niv}), &\text{ if } 	\mathbb{E}^PX(T) \leq X^{(0,0)}_{niv},
		\end{cases},
	\end{equation*}
	in which
	\begin{equation*}
		X^{(0,0)}_{niv} = x_0 + \mu_0(\kappa -\kappa_r)T.
	\end{equation*}
\end{corollary}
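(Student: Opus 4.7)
The plan is to obtain this corollary as a direct specialization of Theorem \ref{mmvPC.theorem.efficientfrontier} to the degenerate parameter regime $\mu = \sigma = r = 0$, in which only the reinsurance retention level $u$ remains as a meaningful control. Concretely, I need to show that two quantities simplify as follows: the deterministic baseline $X^{(0,0)}$ collapses to $x_0 + \mu_0(\kappa-\kappa_r)T$, and the coefficient $\rho(t)$ collapses to $\frac{\mu_0^2 \kappa_r^2}{\sigma_0^2}$.

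First, I would plug $r \equiv 0$ into the explicit expression \eqref{mmvPC.x0} for the riskless wealth. Both exponentials $e^{\int_0^t r(s)\,ds}$ and $e^{\int_s^t r(u)\,du}$ reduce to $1$, and the integral $\int_0^T \mu_0(\kappa-\kappa_r)\,ds$ evaluates trivially to $\mu_0(\kappa-\kappa_r)T$. This yields $X^{(0,0)}(T) = x_0 + \mu_0(\kappa-\kappa_r)T$, which is precisely the constant $X^{(0,0)}_{niv}$ appearing in the statement.

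Next, I would address the reduction of $\rho(t)$. Literally substituting $\sigma = 0$ into $\rho(t) = \frac{\mu_0^2 \kappa_r^2}{\sigma_0^2} + \frac{(\mu(t)-r(t))^2}{\sigma(t)^2}$ is singular, so I would instead interpret ``no investment'' as removing $\pi$ from the admissible set and re-running the HJBI derivation of Theorem \ref{mmvPC.theorem.valuefunction} with only the controls $(u,q)$. The same ansatz $\phi(t,x,y) = \Lambda(t)xy + \Theta(t)y^2 + \Psi(t)y$ works; the only change is that the supremum over $\pi$ and the corresponding infimum over $p$ drop out, so the Riccati-type ODE for $\Theta$ becomes $\Theta'(t) + \frac{\mu_0^2\kappa_r^2}{\sigma_0^2}\Theta(t) = 0$. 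Equivalently, one can keep $\sigma > 0$ and let $\mu \to r$; the formula $\pi^*$ in Theorem \ref{mmvPC.strategies.optimal} shows the optimal portfolio vanishes continuously, and $\rho(t)$ passes to $\frac{\mu_0^2\kappa_r^2}{\sigma_0^2}$ in the limit, so that Theorem \ref{mmvPC.theorem.efficientfrontier} applies by continuity.

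Once these two simplifications are in hand, substituting $\rho(t) \equiv \frac{\mu_0^2\kappa_r^2}{\sigma_0^2}$ and $X^{(0,0)} = X^{(0,0)}_{niv}$ into the two cases of Theorem \ref{mmvPC.theorem.efficientfrontier} produces the claimed efficient frontier immediately. The only real obstacle is the notational care in handling $\rho$ when $\sigma = 0$; the underlying probabilistic and variational content is entirely contained in the already-proved Theorem \ref{mmvPC.theorem.efficientfrontier}, so no fresh estimates are needed.
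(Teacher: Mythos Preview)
Your proposal is correct and matches the paper's (implicit) approach: the paper states this corollary without proof, treating it as an immediate specialization of Theorem~\ref{mmvPC.theorem.efficientfrontier}. Your additional care in interpreting the $\sigma=0$ singularity in $\rho(t)$---either by re-deriving the HJBI equation with $\pi$ removed or by the limit $\mu\to r$---is more rigor than the paper itself supplies, since the corollary as stated still writes $\rho(s)$ without spelling out its reduced form.
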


\begin{remark}
	\par The efficient frontier in Corollary \ref{mmvPC.corollary.3} coincides with that for the classical MV model (see \cite{bauerle2005benchmark}).
\end{remark}

\begin{corollary} \label{mmvPC.corollary.4}
\par If there is no insurance in the model, in other words, $\lambda = v(dx) = 0$, the efficient frontier is
\begin{equation*}
	\label{mmvPC.ef.3}
	\begin{cases}
		 \left( \frac{ \exp(- \int_0^T \rho(s) ds) }{1 -\exp(- \int_0^T \rho(s) ds)}\left(\mathbb{E}^PX(T)- X^{(0,0)}_{nis}  \right)^2, \mathbb{E}^PX(T) \right), &\text{ if }  \mathbb{E}^PX(T) > X^{(0,0)}_{nis}, \\
		 (0, X^{(0,0)}_{nis}), &\text{ if } 	\mathbb{E}^PX(T) \leq X^{(0,0)}_{nis},
	\end{cases},
\end{equation*}
in which
\begin{equation*}
	X^{(0,0)}_{nis} = x_0 \exp \left( \int_0^T r(s) ds \right).
\end{equation*}
\end{corollary}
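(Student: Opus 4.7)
My plan is to obtain this corollary as a direct specialization of Theorem \ref{mmvPC.theorem.efficientfrontier}. First, I would observe that ``no insurance'' means the Poisson random measure $N(ds,dz)$ vanishes identically, equivalently $v(dz)\equiv 0$, so that $\mu_0=\int_K z\,v(dz)=0$ and the retention level $u$ plays no role; one may simply fix $u\equiv 0$ in the admissible set. Under this reduction, the wealth dynamics \eqref{mmvPC.wealth} collapses to the pure Black–Scholes investment equation
\begin{equation*}
dX(t) = \bigl[r(t)X(t) + \pi(t)(\mu(t)-r(t))\bigr] dt + \pi(t)\sigma(t)\,dW(t), \qquad X(0)=x_0,
\end{equation*}
and the insurance loading contribution $\mu_0(\kappa-\kappa_r)$ in the drift disappears.

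Next I would trace through how each ingredient of Theorem \ref{mmvPC.theorem.efficientfrontier} specializes. The risk-transfer contribution $\frac{\mu_0^2\kappa_r^2}{\sigma_0^2}$ to $\rho(t)$ is generated purely by the jump part of the model; once $v(dz)\equiv 0$, the corresponding HJBI terms and the saddle component $\hat{q}$ of \eqref{mmvPC.saddle} are identically zero, and the derivation in Theorem \ref{mmvPC.theorem.valuefunction} goes through verbatim with the single modification that $\rho(t)$ is replaced by $\frac{(\mu(t)-r(t))^2}{\sigma(t)^2}$. Similarly, the zero-strategy benchmark wealth \eqref{mmvPC.x0} becomes
\begin{equation*}
X^{(0,0)}(T) \;=\; x_0\exp\!\left(\int_0^T r(s)\,ds\right) \;=\; X^{(0,0)}_{nis},
\end{equation*}
since the integral term containing $\mu_0(\kappa-\kappa_r)$ drops out.

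Finally, I would substitute these specialized quantities into the efficient-frontier formula from Theorem \ref{mmvPC.theorem.efficientfrontier}. The case split $\mathbb{E}^PX(T)\gtrless X^{(0,0)}$ transfers verbatim with $X^{(0,0)}_{nis}$ in place of $X^{(0,0)}$, yielding exactly the stated expression. The one point requiring a brief comment is the formal ``$0/0$'' appearance of $\frac{\mu_0^2\kappa_r^2}{\sigma_0^2}$ when $v\equiv 0$; I would dispose of this by noting that this term enters only via the supremum over $u$ in \eqref{mmvPC.hjbi.equation.compact.guess.2}, which is identically zero when $u$ is constrained to be zero, so the term should simply be omitted rather than evaluated as a limit. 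I expect this small bookkeeping issue to be the only subtlety; everything else is routine substitution into Theorem \ref{mmvPC.theorem.efficientfrontier}.
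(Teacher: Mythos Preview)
Your proposal is correct and matches the paper's approach: the paper states this corollary without proof, treating it as an immediate specialization of Theorem \ref{mmvPC.theorem.efficientfrontier} with $v(dz)\equiv 0$ (hence $\mu_0=0$, $X^{(0,0)}=X^{(0,0)}_{nis}$, and $\rho(t)=\frac{(\mu(t)-r(t))^2}{\sigma(t)^2}$). Your extra remark on the formal $0/0$ in $\frac{\mu_0^2\kappa_r^2}{\sigma_0^2}$ is a sensible clarification that the paper leaves implicit.
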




\section{Numerical example}

\begin{example}
	\label{example.nonmonotone}
	\par Consider a company whose risk-aversion is $\theta = 2$. The company's deterministic profit from the operation is $\eta \equiv 10$. Assume there exists an opportunity of arbitrage in the financial market and the return is $\varepsilon \sim U(0,12)$, an uniformly distributed random variable. Then, the company's total wealth depends on whether it exploits the opportunity of arbitrage, namely, the total wealth is $X^u = \eta + u \varepsilon$ where $u = 0$ or $1$. It is obvious that $X^1 > X^0$, but
	\begin{align*}
	U_{2}(X^0) = 10,
	\end{align*}
	which is greater than
	\begin{align*}
	U_{2}(X^1) = 10 + \mathbb{E}^P \varepsilon - \frac{\theta}{2} \mathbb{V}ar^P \varepsilon = 4.
	\end{align*}
	It shows that the classical MV criterion may mislead the investors in certain situation. The MMV criterion remedies this problem and we have 
	\begin{align*}
	V_{2}(X^0) = 10,
	\end{align*}
	which is less than
	\begin{align*}
	V_{2}(X^1) = \frac{39}{4}+\frac{4\sqrt{3}}{3} \approx 12.0594.
	\end{align*}
\end{example}

\begin{figure}[htb]
	\centering
	\includegraphics[width=5in]{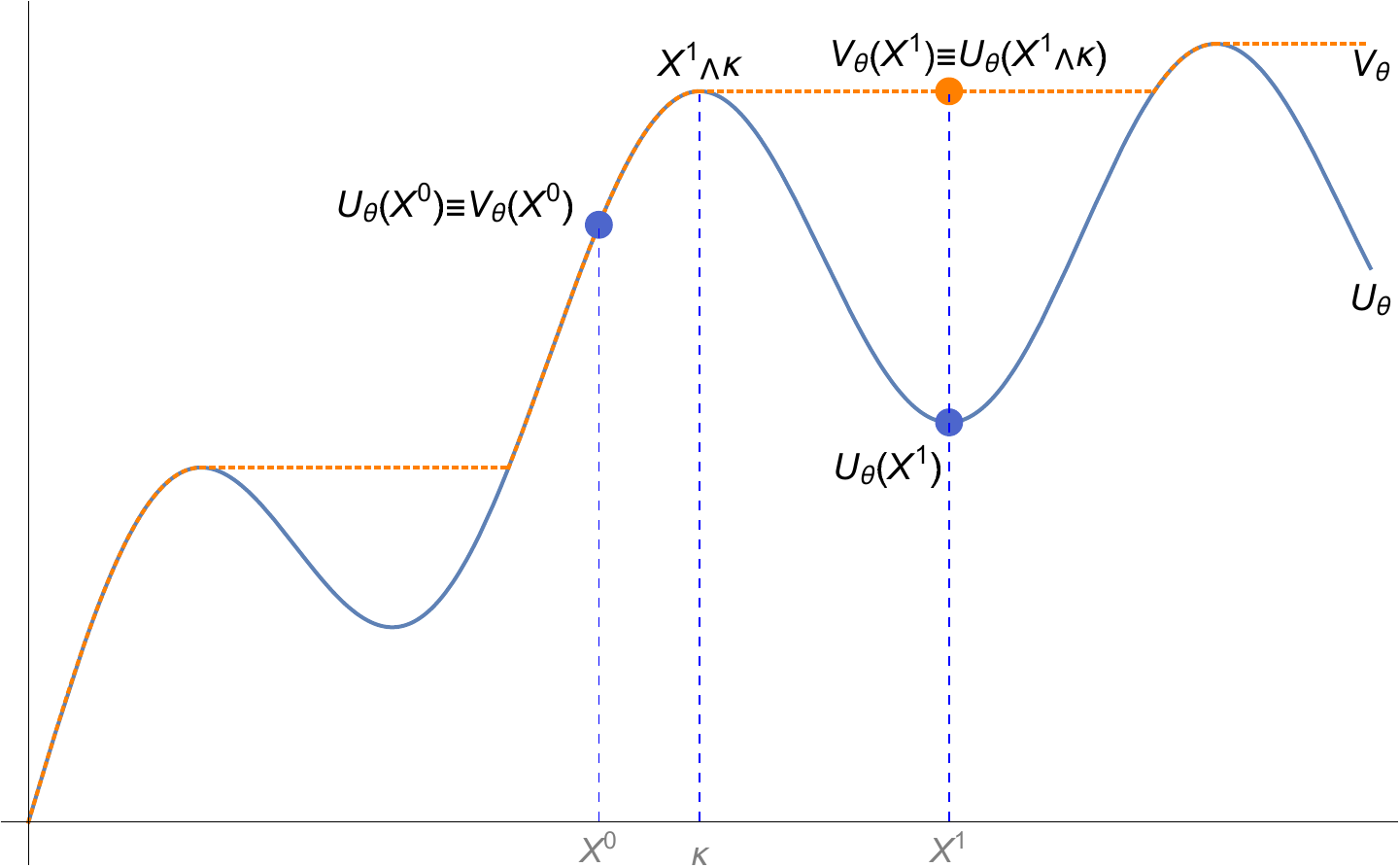}
	\caption{How does the MMV remedy the defect of non-monotonicity of classical MV}
	\label{mmvPC.fig.monotonicity}
\end{figure}

\par The value of $V_{2}(X^1)$ is calculated with the aid of Theorem 2.2 in \cite{maccheroni2009portfolio}. Let $\mathcal{G}_{2}$ be the domain of monotonicity of the classical MV criterion $U_2$. Since $X^1 \notin \mathcal{G}_{2}$, we have
\begin{align*}
V_2(X^1) = U_2(X^1 \wedge \kappa),
\end{align*}
where
\begin{align*}
\kappa = \max\{t \in  \mathbb{R},X^1 \wedge t \in \mathcal{G}_{2} \}.
\end{align*}
It is easy to obtain that $\kappa = 10 + 2 \sqrt{3}$. Then
\begin{align*}
V_{2}(X^1) = \mathbb{E}[X^1 \wedge \kappa] - \frac{2}{2} \mathbb{V}ar[X^1 \wedge \kappa] = \frac{19}{2} + 2\sqrt{3} - (-\frac{1}{4} + \frac{2\sqrt{3}}{3}) = \frac{39}{4}+\frac{4\sqrt{3}}{3}.
\end{align*}
\par The monotonicity of the MMV criterion can be illustrated by Figure \ref{mmvPC.fig.monotonicity}. The horizontal axis represents an ordering and sequencing set of random variables where the order $\preccurlyeq$ is defined by $\mathop{\leq}\limits^{\scriptscriptstyle P-a.s.}$. The vertical axis is the value of the criterion. The blue line illustrates the value of the classical MV criterion for different random variables. The orange dashed line illustrates the value of the MMV criterion. Obviously, the blue line is not always increasing and, in the domain of non-monotonicity, it is remedied by the orange dashed line. 

\begin{experiment}
\par Consider an insurance company who wants to optimize its wealth in three years under the MMV criterion. For simplicity, we assume the amount of claim is exponentially distributed with intensity $\delta$. The other coefficients in the model are assumed to be
\begin{align*}
&T = 3, \quad \mu(t) \equiv 0.15, \quad r(t) \equiv 0.08, \quad \sigma(t) \equiv 0.2,\\
&x_0 = 1, \quad S(0) = 10, \quad \lambda = 5, \quad \delta = 10,\\
&\kappa = 0.1, \quad \kappa_r = 0.15, \quad \theta = 2.
\end{align*}
\end{experiment}
\begin{figure}[htp]
	\centering
	\includegraphics[]{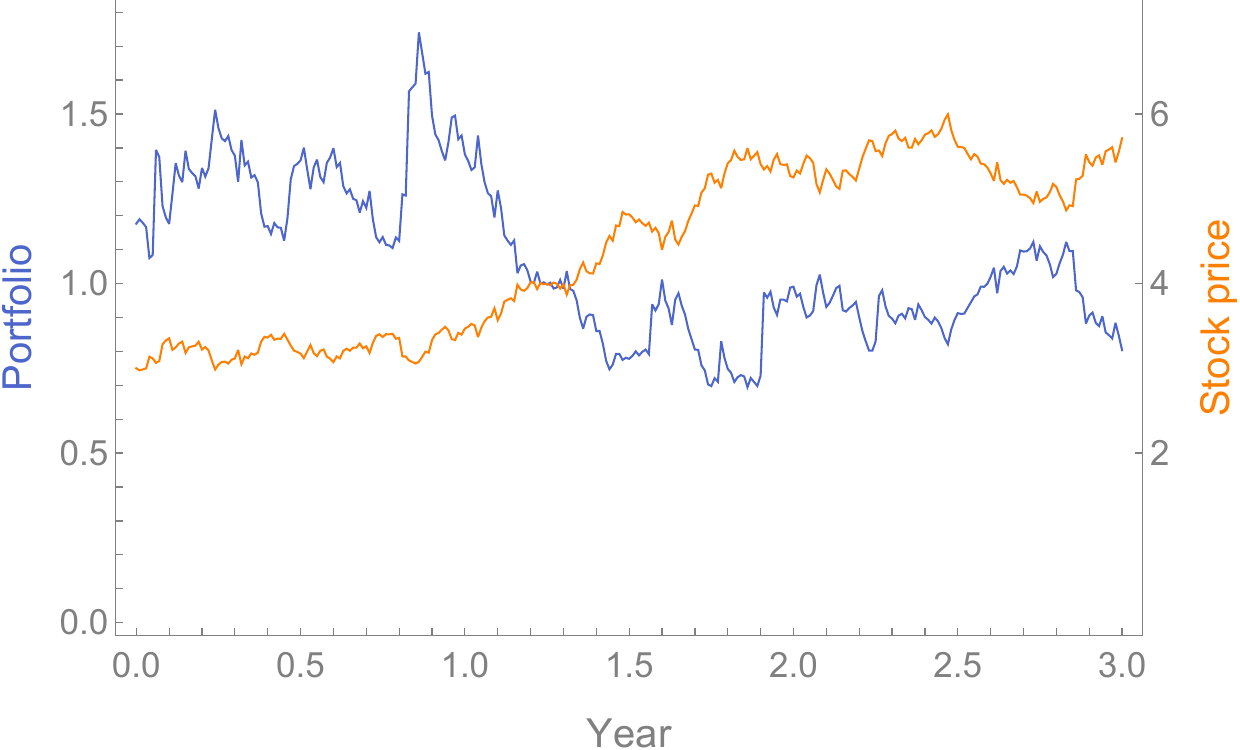}
	\caption{the trajectories of the optimal portfolio and the stock price}
	\label{mmvPC.fig.PortfolioStockPrice}
\end{figure}
\par In order to give an intuitive illustration, we simulate the trajectories of the optimal portfolio and the stock price in Figure \ref{mmvPC.fig.PortfolioStockPrice}, and the trajectories of the optimal retention level and the claims before $T = 3$ in Figure \ref{mmvPC.fig.RetentionLevelClaims}. It is easy to see that the trajectory of the optimal portfolio is opposite to that of the stock price, and the trajectory of the optimal retention level rises sharply once a claim arrives. It shows the optimal strategies are `Buy low, Sell high' strategies.
\begin{figure}[htp]
	\centering
	\includegraphics[]{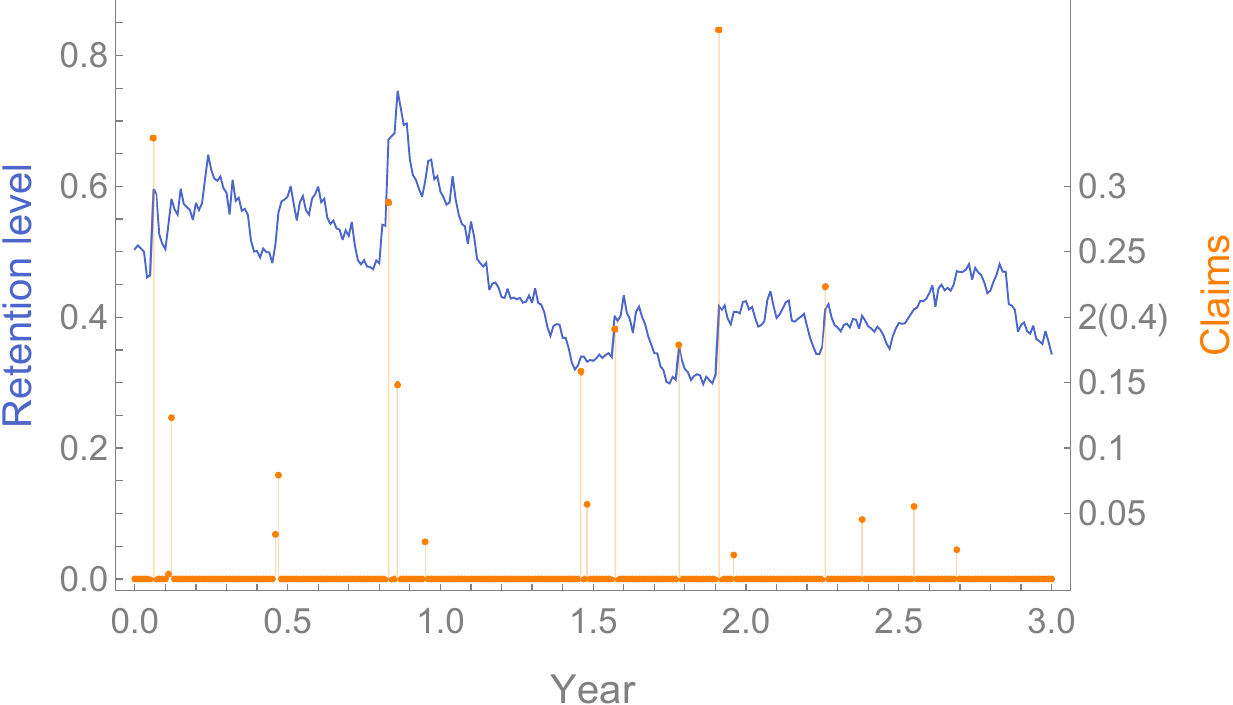}
	\caption{the trajectories of the optimal retention level and the claims}
	\label{mmvPC.fig.RetentionLevelClaims}
\end{figure}

\section{Conclusion}
\par {\color{magenta}This paper studies the optimal monotone mean-variance problem for an insurance company in the framework of the Cram{\'e}r-Lundberg risk model. The investment and the reinsurance strategies are taken into consideration. The work of this paper extends the results of \cite{li2021optimal} by incorporating jumps into the surplus process of the insurance company. To investigate the MMV problem, we first give an auxiliary two-player zero-sum SDG. A generalized exponential martingale representation theorem for nonnegative martingale is proved, which is helpful for tackling with the absolutely continuity of the disturbed probability measure $Q$. The equilibrium of the SDG is found by applying the dynamic programming approach and solving the corresponding HJBI equation.} The optimal solution of the original problem follows immediately from the equilibrium. It is proved that the optimal disturbed probability measure $Q^*$ is indeed equivalent to the real measure $P$. In the second part of the paper, we provides the efficient frontier for the MMV problem. Two numerical examples are given. To the best of our knowledge, it is the first time to study the MMV optimization problem for the model involving jump process. Possible future researches include the MMV optimization problem for more sophisticated models such as Cox process or spectral negative L{\'e}vy process. It is worth noting that, by the work of \cite{strub2020note}, the optimal strategies for the MMV and the classical MV coincide in the framework of continuous semimartingale. But it is still an open problem whether the optimal strategies always coincide for a large class of jump processes. In any case, the optimal strategy for the MMV problem makes sense economically.

\begin{appendices}

\section{From Problem {(G)} to Problem {(P$_{sxy}$)}} \label{mmvPC.section.PGtoPsxy}

\par For any market's strategy $Q \in \Delta^2(P)$, let
\begin{align}
\label{mmvPC.def.y} &Y(t) : = \frac{dQ}{dP}\big|_{\mathcal{F}_t}, \\
\nonumber &Y := Y(T) \equiv \frac{dQ}{dP}.
\end{align}
Therefore, by virtue of properties of the Radon-Nikodym derivative, we have that $Y(t) = \mathbb{E}^P[Y|\mathcal{F}_t]$. In other words, $Y(t)$  is a nonnegative martingale. In view of the fact that $\mathbb{F}$ is right-continuous, $Y(t)$ has a RCLL version which, for simplicity, we also denote it as $Y(t)$. Moreover, $Y(0) = \mathbb{E}^P[Y|\mathcal{F}_0] = 1$.

\begin{lemma} \label{mmvPC.lemma.yQ}
	\par Suppose that $Q \in \Delta^2(P)$, then the process $Y(t)$ defined by \eqref{mmvPC.def.y} is a nonnegative square-integrable $P$-martingale adapted to $\mathcal{F}_t$ with $\mathbb{E}^PY(t) = 1$. Moreover,
	\begin{equation}
	\label{mmvPC.bayes}
	\mathbb{E}^Q[X(t)] = \mathbb{E}^P\bigg[X(t)Y(t)\bigg]
	\end{equation}
	holds for any $\mathcal{F}_t$-measurable $X(t)$. Conversely, if $Y(t)$ is a nonnegative $\mathcal{F}_t$-adapted square-integrable $P$-martingale with $\mathbb{E}^PY(t) = 1$, then, for any $t \in [0,T]$, the probability measures $Q_t$ defined via
	\begin{equation}
	\label{mmvPC.def.Q} Q_t(A) = \mathbb{E}^P\bigg[1_AY(t)\bigg], \quad A \in \mathcal{F}_t
	\end{equation}
	belongs to $\Delta^2(P)$, and $Q_t$ satisfies the following consistency condition:
	\begin{equation}
	\label{mmvPC.condition.consistency}
	Q_T(A) = Q_t(A), \quad \forall A \in \mathcal{F}_t, t \in [0,T].
	\end{equation}
	Moreover, for any $\mathcal{F}_t$-measurable $X(t)$ such that $E^{Q_T}|X(t)| < \infty$, $Q_T$ satisfies \eqref{mmvPC.bayes} .
\end{lemma}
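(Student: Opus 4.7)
The plan is to verify the assertions directly from the definitions, using only standard properties of the Radon--Nikodym derivative and conditional expectations. The statement splits cleanly into a forward direction (from $Q$ to $Y$), a Bayes-type identity, and a converse direction (from a nonnegative square-integrable martingale back to a consistent family of measures), and each piece is essentially a one-line computation.

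For the forward direction, I would set $Y := dQ/dP$ and $Y(t) := \mathbb{E}^P[Y\mid\mathcal{F}_t]$. Nonnegativity of $Y(t)$ follows from $Y \geq 0$ $P$-a.s.\ (since $Q$ is a probability measure), the martingale property is just the tower rule, and square-integrability follows from conditional Jensen's inequality: $\mathbb{E}^P[Y(t)^2] \leq \mathbb{E}^P[Y^2] < \infty$, where the finiteness is the defining property of $\Delta^2(P)$. The normalization $\mathbb{E}^PY(t) = \mathbb{E}^PY = Q(\Omega) = 1$ is immediate. The Bayes identity \eqref{mmvPC.bayes} then follows from $\mathbb{E}^Q[X(t)] = \mathbb{E}^P[X(t)Y] = \mathbb{E}^P\bigl[X(t)\,\mathbb{E}^P[Y\mid\mathcal{F}_t]\bigr] = \mathbb{E}^P[X(t)Y(t)]$, using $\mathcal{F}_t$-measurability of $X(t)$ to pull it inside the conditional expectation.

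For the converse, given a nonnegative $\mathcal{F}_t$-adapted square-integrable $P$-martingale $Y$ with $\mathbb{E}^PY(t)=1$, I would define $Q_t(A) := \mathbb{E}^P[1_A Y(t)]$ for $A \in \mathcal{F}_t$. Countable additivity follows from monotone convergence, $Q_t(\Omega) = \mathbb{E}^PY(t) = 1$, and $Q_t \ll P$ is immediate from the definition. The identification $dQ_t/dP = Y(t)$ combined with square-integrability of $Y(t)$ places $Q_t$ in $\Delta^2(P)$. The consistency condition \eqref{mmvPC.condition.consistency} is again a tower-rule calculation: for $A \in \mathcal{F}_t$,
\begin{equation*}
Q_T(A) = \mathbb{E}^P[1_A Y(T)] = \mathbb{E}^P\bigl[1_A\,\mathbb{E}^P[Y(T)\mid\mathcal{F}_t]\bigr] = \mathbb{E}^P[1_A Y(t)] = Q_t(A),
\end{equation*}
using the martingale property of $Y$. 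Finally, Bayes for $Q_T$ on $\mathcal{F}_t$-measurable integrands follows by the same tower argument as in the forward direction.

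Since the result is purely measure-theoretic and does not interact with the jump--diffusion structure, there is no real obstacle. The only bookkeeping point worth flagging is that, having already noted the right-continuity of $\mathbb{F}$ in Section \ref{mmvPC.section.market}, the nonnegative $P$-martingale $Y(t)$ admits an RCLL modification; this modification is the one we work with implicitly in the subsequent martingale representation argument. Everything else is a direct application of the definition of Radon--Nikodym derivative and the tower property of conditional expectation.
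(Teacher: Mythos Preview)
Your proposal is correct and follows essentially the same route as the paper: Jensen's inequality for square-integrability, the tower property for the Bayes identity and the consistency condition, and the obvious absolute continuity check for the converse. The only cosmetic difference is that the paper, for the last assertion, first establishes the conditional Bayes formula $\mathbb{E}^P[X(t)Y(t)\mid\mathcal{F}_s] = Y(s)\,\mathbb{E}^{Q_T}[X(t)\mid\mathcal{F}_s]$ and then specializes to $s=0$, whereas you argue the unconditional identity directly; both are immediate from the tower rule.
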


\begin{proof}
	\par Suppose that $Q \in \Delta^2(P)$. It is obvious that $Y(t)$ defined by \eqref{mmvPC.def.y} is a nonnegative martingale. By Jensen's Inequality, we have
	\begin{align*}
	\mathbb{E}^P[Y(t)^2]
	=& \mathbb{E}^P \bigg\{\bigg[\mathbb{E}^P[Y|\mathcal{F}_t]\bigg]^2 \bigg\}\\
	\leq& \mathbb{E}^P \bigg[\mathbb{E}^P[Y^2|\mathcal{F}_t] \bigg]\\
	=& \mathbb{E}^P\bigg[(\frac{dQ}{dP})^2\bigg] < +\infty, \quad \forall t \in [0,T].
	\end{align*}
	Thus $Y(t)$ is square-integrable. By the definition of Radon-Nikodym derivative, we have
	\begin{align*}
	\mathbb{E}^Q[X(t)]
	=& \mathbb{E}^P[X(t)Y] \\
	=& \mathbb{E}^P\bigg[\mathbb{E}^P[X(t)Y|\mathcal{F}_t]\bigg] \\
	=& \mathbb{E}^P\bigg[X(t)Y(t)\bigg],
	\end{align*}
	which proves \eqref{mmvPC.def.Q}.
	\par Conversely, let $Y(t)$ be a nonnegative square-integrable martingale with unit expectation. Since $Q_t(A) = \mathbb{E}^P[1_AY(t)] = 0$ whenever $P(A) = 0$, it follows from $\mathbb{E}^PY(t) = 1$ that $Q$ defined via \eqref{mmvPC.def.Q} is an absolutely continuous probability measure. The fact that $Q \in \Delta^2(P)$ is due to the square integrability of $Y(t)$. The consistency condition is due to the martingale property of $Y(t)$. In order to prove $Q_T$ satisfies \eqref{mmvPC.bayes}, we first prove the Bayes formula 
	\begin{equation}
	\label{mmvPC.pro.bayes}
	\mathbb{E}^P\bigg[X(t)Y(t)\bigg|\mathcal{F}_s\bigg] = Y(s)E^{Q_T}\bigg[X(t)\bigg|\mathcal{F}_s\bigg], \quad 0 < s < t < T.
	\end{equation}
	By using the definition of conditional expectation, we have for any $A \in \mathcal{F}_s$,
	\begin{align*}
	E^{Q_T}\bigg[1_A\frac{\mathbb{E}^P[X(t)Y(t)|\mathcal{F}_s]}{Y(s)}\bigg] =& \mathbb{E}^P\bigg[1_AY(T)\frac{\mathbb{E}^P[X(t)Y(t)|\mathcal{F}_s]}{Y(s)}\bigg] \\
	=& \mathbb{E}^P\bigg[1_A\mathbb{E}^P[Y(T)|\mathcal{F}_s]\frac{\mathbb{E}^P[X(t)Y(t)|\mathcal{F}_s]}{Y(s)}\bigg] \\
	=& \mathbb{E}^P\bigg[1_A\mathbb{E}^P[X(t)Y(t)|\mathcal{F}_s]\bigg] \\
	=& \mathbb{E}^P\bigg[1_AX(t)Y(t)\bigg] \\
	=& \mathbb{E}^P\bigg[1_AX(t)Y(T)\bigg] \\
	=& E^{Q_T}\bigg[1_AX(t)\bigg].
	\end{align*}
	The second equality and the sixth equality are due to the definition of conditional expectation. Letting $s = 0$ proves the desired result.
\end{proof}

\par Lemma \ref{mmvPC.lemma.yQ} establishes a relationship between the mean one nonnegative square-integrable martingale and the absolutely continuous probability with square-integrable density with respect to $P$. Let $\mathcal{Y}^2(P)$ be the set of all nonnegative $\mathcal{F}_t$-adapted square-integrable $P$-martingale with $\mathbb{E}^PY(t) = 1$. Thus, $Q \in \Delta^2(P)$ if and only if  $\frac{dQ}{dP}\big|_{\mathcal{F}_t} \in \mathcal{Y}^2(P)$. The MMV objective \eqref{mmvPC.obj.1} can be formulated as
\begin{align}
\label{mmvPC.I} I_{\theta}(a):
&= \min_{Y  \in \mathcal{Y}^2(P)} \left \{ \mathbb{E}^P\bigg[X^a(T)Y(T)\bigg]+\frac{1}{2 \theta}\mathbb{E}^P\bigg[Y(T)^2-1\bigg] \right \},
\end{align}
where an equivalent problem is to maximize
\begin{equation*}
\widetilde{I}_{\theta}(a): = \min_{Y  \in \mathcal{Y}^2(P)} \left \{ \mathbb{E}^P\bigg[X^a(T)Y(T)+\frac{1}{2 \theta}Y(T)^2\bigg] \right \}.
\end{equation*}
Therefore, Problem {\bf (G)} can be changed equivalently into the following problem:

\begin{problem}{\bf (D)} Let
	\begin{equation*}
	\label{mmvPC.obj.2} J_{\theta}(a , Y ): =  \mathbb{E}^P\bigg[X^a(T)Y(T)+\frac{1}{2 \theta}Y(T)^2\bigg].
	\end{equation*}
	The player one wants to maximize $J_{\theta}(a ,Y )$ with its strategy $a$ over $\mathcal{A}[0,T]$ and the player two wants to maximize $-J_{\theta}(a ,Y )$ with its strategy $Y $ over $\mathcal{Y}^2(P)$.
\end{problem}

\begin{proposition} \label{mmvPC.proposition.2}
	\par If $(a^*,Y^*)$ is a Nash equilibrium of Problem {\bf (D)} and $\mathbb{E}^{Q^*}|X^{a^*}(t)| < \infty$ where $Q^* := Q_T$ is defined by \eqref{mmvPC.def.Q} with $Y  = Y^* $, then $(a^* ,Q^*)$ is a Nash equilibrium of Problem {\bf (G)}. Conversely, if $(a^* ,Q^*)$ is a Nash equilibrium of Problem {\bf (G)}, and let $\{Y^*(t):t \in [0,T]\}$ be the process defined by \eqref{mmvPC.def.y} where $Q = Q^*$, then $(a^* ,Y^* )$ is a Nash equilibrium of Problem {\bf (D)}.
\end{proposition}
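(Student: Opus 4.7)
The plan is to exploit the bijective correspondence between $\Delta^2(P)$ and $\mathcal{Y}^2(P)$ established by Lemma~\ref{mmvPC.lemma.yQ}, and then to show that the two objective functionals $J_\theta$ of Problem~\textbf{(G)} and Problem~\textbf{(D)} differ only by an additive constant when we identify $Q \leftrightarrow Y$ through $Y(T) = dQ/dP$. Since adding a constant to the payoff does not alter best responses, Nash equilibria on one side translate directly into Nash equilibria on the other side.

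More concretely, my first step would be to fix notation: given any $Q\in\Delta^2(P)$, put $Y(t):=\mathbb{E}^P[dQ/dP\,|\,\mathcal{F}_t]$, which by Lemma~\ref{mmvPC.lemma.yQ} lies in $\mathcal{Y}^2(P)$; conversely, given $Y\in\mathcal{Y}^2(P)$, define $Q$ by $Q(A) = \mathbb{E}^P[\mathbf{1}_A Y(T)]$, which by the same lemma lies in $\Delta^2(P)$. These two operations are mutually inverse. Next, I would use the Bayes-type identity \eqref{mmvPC.bayes} together with the identity $C(Q\|P)=\mathbb{E}^P[Y(T)^2]-1$ to compute
\begin{equation*}
J_\theta^{(D)}(a,Y) \;=\; \mathbb{E}^P\bigl[X^a(T)Y(T)\bigr] + \tfrac{1}{2\theta}\mathbb{E}^P[Y(T)^2] \;=\; \mathbb{E}^Q[X^a(T)] + \tfrac{1}{2\theta}\bigl(C(Q\|P)+1\bigr) \;=\; J_\theta^{(G)}(a,Q) + \tfrac{1}{2\theta}.
\end{equation*}
The application of \eqref{mmvPC.bayes} is precisely where the hypothesis $\mathbb{E}^{Q^*}|X^{a^*}(T)|<\infty$ in the forward direction is needed: without it, the change of measure in the first equality is not legitimate. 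In the converse direction, $\mathbb{E}^{Q^*}|X^{a^*}(T)|<\infty$ is already implicit in the fact that $J_\theta^{(G)}(a^*,Q^*)$ is finite at the Nash equilibrium.

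With the identity $J_\theta^{(D)}(a,Y) = J_\theta^{(G)}(a,Q)+\frac{1}{2\theta}$ in hand, the rest is immediate. Assume $(a^*,Y^*)$ is a Nash equilibrium of \textbf{(D)} in the sense of Definition~\ref{mmvPC.definition.1}; by Lemma~\ref{mmvPC.lemma.1}(b) this means
\begin{equation*}
J_\theta^{(D)}(a,Y^*) \;\le\; J_\theta^{(D)}(a^*,Y^*) \;\le\; J_\theta^{(D)}(a^*,Y) \qquad \forall (a,Y)\in\mathcal{A}[0,T]\times\mathcal{Y}^2(P).
\end{equation*}
Subtracting $\frac{1}{2\theta}$ from each term and translating $Y,Y^*$ into the corresponding $Q,Q^*$ via the bijection, the same chain of inequalities becomes exactly the saddle-point characterisation of a Nash equilibrium of \textbf{(G)}. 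The converse is carried out by reading the same display in the opposite direction, starting from the saddle-point inequalities for $(a^*,Q^*)$ in \textbf{(G)} and using the bijection to produce the admissible $Y^*$.

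The only genuine obstacle is verifying that the correspondence $Q\leftrightarrow Y$ is well defined on the ranges that actually appear, namely that every test perturbation $Y\in\mathcal{Y}^2(P)$ in the right-hand inequality for \textbf{(D)} comes from some $Q\in\Delta^2(P)$ (and vice versa) — but this is precisely the content of Lemma~\ref{mmvPC.lemma.yQ}. Besides this, one should double-check the integrability side condition: in the forward direction, $\mathbb{E}^{Q^*}|X^{a^*}(T)|<\infty$ is exactly the hypothesis added to legitimise \eqref{mmvPC.bayes} at the equilibrium point; the middle inequality $J_\theta^{(D)}(a^*,Y^*)\le J_\theta^{(D)}(a^*,Y)$ only requires $Y$ to range over $\mathcal{Y}^2(P)$, so no extra integrability is lost there.
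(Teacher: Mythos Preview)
Your proposal is correct and follows essentially the same approach as the paper: use the bijection of Lemma~\ref{mmvPC.lemma.yQ} to pass between $Q\in\Delta^2(P)$ and $Y\in\mathcal{Y}^2(P)$, observe that the two payoffs differ by the constant $\frac{1}{2\theta}$, and transfer the saddle-point inequalities directly. You are slightly more explicit than the paper about the role of the constant shift and the integrability hypothesis, but the argument is the same.
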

\begin{proof}
	\par We first consider the case that $(a^* ,Y^* )$ is a Nash equilibrium of Problem {\bf (D)} and $\mathbb{E}^{Q^*}|X^{a^*}(t)| < \infty$, where $Q^* := Q_T$ is defined by \eqref{mmvPC.def.Q} with $Y  = Y^* $. For any $Q \in \Delta^2(P)$, let $Y(t) : = \frac{dQ}{dP}\big|_{\mathcal{F}_t}$. It follows from Lemma \ref{mmvPC.lemma.yQ} that $Y  \in \mathcal{Y}^2(P)$. For any $a  \in \mathcal{A}[0,T]$, we have
	\begin{equation*}
	J_{\theta}(a ,Y^* ) \leq J_{\theta}(a^* ,Y^* ) \leq J_{\theta}(a^* ,Y ),
	\end{equation*}
	which, by Lemma \ref{mmvPC.lemma.yQ} and the definition of Problem {\bf (G)} and Problem {\bf (D)}, gives
	\begin{equation*}
	J_{\theta}(a ,Q^*) \leq J_{\theta}(a^* ,Q^*) \leq J_{\theta}(a^* ,Q).
	\end{equation*}
	\par For the inequality of the opposite direction, the proof is similar.
\end{proof}

\par So far, the strategy processes are $a$ and $Y $. However, $Y $ is not a tractable strategy in the theory of dynamic programming. Here we characterize it as a state process controlled by two control processes $p$ and $q$. The following theorem is a generalization of the exponential martingale representation theorem for the nonnegative martingale. 

\begin{theorem} \label{mmvPC.theorem.Qcharacterization}
	\par There exists an $\mathcal{F}_t$-adapted process $p(t)$ and an $\mathcal{F}_t$-predictable random field $q(t,z)$, such that $Y$ admits the representation
	\begin{equation}
	\label{mmvPC.sde.y.representation} Y(t) = 1 + \int_0^t Y(s-) d L(s), \quad \forall t \in [0,T],
	\end{equation}
	where $L(t) := \int_0^t p(s)dW(s) + \int_0^t \int_{K} q(s,z) \widetilde{N}(ds,dz)$ with $\Delta L(t) \geq -1$, $t \in [0,T]$. Besides, $p(t)$ and $q(t,z)$ satisfy
	\begin{equation}
	\label{mmvPC.condition.integrable.property}
	E \bigg[ \int_0^{T \wedge \zeta_n} p(t)^2 dt + \int_0^{T \wedge \zeta_n} \int_{K} (1 - \sqrt{q(t,z)+1})^2 v(dz) dt \bigg] < \infty,
	\end{equation}
	for any integer $n \geq 1$, where 
	\begin{equation*}
	\zeta_n := \inf\left\{ t \geq 0; Y(t) \leq \frac{1}{n}\right\}.
	\end{equation*}
\end{theorem}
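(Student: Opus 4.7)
The plan is to combine the martingale representation theorem for the Brownian-Poisson filtration $\mathbb{F} = \mathbb{F}^W \vee \mathbb{F}^N$ with the classical theory of nonnegative martingales and Dol\'{e}ans-Dade exponentials developed in \cite{kabanov1979absolute} and \cite{liptser2012theory}. Since $Y(t) = \mathbb{E}^P[Y(T)\,|\,\mathcal{F}_t]$ is a nonnegative square-integrable $P$-martingale adapted to the Brownian-Poisson filtration, the standard martingale representation theorem yields an $\mathcal{F}_t$-adapted process $H$ and an $\mathcal{F}_t$-predictable random field $G$ such that
\begin{equation*}
Y(t) = 1 + \int_0^t H(s)\,dW(s) + \int_0^t \int_K G(s,z)\,\widetilde{N}(ds,dz), \quad t \in [0,T],
\end{equation*}
with $\mathbb{E}^P \int_0^T H(s)^2\,ds < \infty$ and $\mathbb{E}^P \int_0^T \int_K G(s,z)^2\,v(dz)\,ds < \infty$.

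Next, because $Y$ is a nonnegative martingale, it is absorbed at zero: setting $\zeta_\infty := \lim_{n \to \infty} \zeta_n$, one has $Y(t) = 0$ for all $t \geq \zeta_\infty$ almost surely, while $Y(s-) \geq 1/n > 0$ on $[0, T \wedge \zeta_n]$. This allows us to define
\begin{equation*}
p(s) := \frac{H(s)}{Y(s-)}\mathbf{1}_{\{Y(s-) > 0\}}, \qquad q(s,z) := \frac{G(s,z)}{Y(s-)}\mathbf{1}_{\{Y(s-) > 0\}}.
\end{equation*}
Direct substitution shows $Y(t) = 1 + \int_0^t Y(s-)\,dL(s)$ on $[0, T \wedge \zeta_n]$ for every $n$, and since both sides vanish on $(\zeta_\infty, T]$ (the left by absorption, the right because $Y(s-) \equiv 0$ kills the integrand), the representation extends to all of $[0,T]$.

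The jump constraint $\Delta L(t) \geq -1$ is forced by the nonnegativity of $Y$: writing $Y(t) = Y(t-)(1+\Delta L(t))$, on $\{Y(t-) > 0\}$ the requirement $Y(t) \geq 0$ gives $\Delta L(t) \geq -1$, while on $\{Y(t-) = 0\}$ our definition makes $\Delta L(t) = 0$. For the integrability assertion at the stopping level $\zeta_n$, the bound $Y(s-) \geq 1/n$ gives
\begin{equation*}
\mathbb{E}^P \int_0^{T \wedge \zeta_n} p(s)^2\,ds \leq n^2\, \mathbb{E}^P \int_0^T H(s)^2\,ds < \infty,
\end{equation*}
and the Hellinger-type bound on $\int_K (1 - \sqrt{1+q(s,z)})^2 v(dz)\,ds$ is the classical finiteness criterion ensuring that the Dol\'{e}ans-Dade exponential of $L$ represents a nonnegative local martingale, which is exactly the content of Lemma 7 and Theorem 13 of \cite{kabanov1979absolute} already invoked elsewhere in the paper. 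The main obstacle will be handling the boundary behavior at the absorption time $\zeta_\infty$---in particular the case where $Y$ jumps from a strictly positive value directly to $0$ at $\zeta_\infty$, so that $\Delta L(\zeta_\infty) = -1$ and the usual logarithmic transform breaks down. This is resolved by the observation that $Y(s-) \equiv 0$ for $s > \zeta_\infty$, so the integrals $\int_{\zeta_\infty}^{t} Y(s-)\,dL(s)$ automatically vanish and agree with the constant value $Y \equiv 0$ past $\zeta_\infty$, making the stochastic exponential identity robust through the absorption instant.
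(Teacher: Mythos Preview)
Your argument is correct and lands at the same conclusion as the paper, but the organization is different. The paper's proof is essentially a two-line citation: it applies Theorem~13 and Lemma~7 of \cite{kabanov1979absolute} directly to the stopped process $Y^{\zeta} := Y(\cdot \wedge \zeta)$ to obtain the multiplicative representation \eqref{mmvPC.sde.y.representation} together with the integrability condition \eqref{mmvPC.condition.integrable.property} in one stroke, and then uses a separate absorption lemma (your $\zeta_\infty$ argument) to identify $Y^{\zeta}$ with $Y$ on all of $[0,T]$. You instead start from the \emph{additive} martingale representation $Y = 1 + \int H\,dW + \int\!\!\int G\,\widetilde N$, construct $p = H/Y_{-}$ and $q = G/Y_{-}$ explicitly on $\{Y_{-}>0\}$, and verify each property by hand. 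Your route is more self-contained and exposes where each ingredient comes from---the jump constraint $\Delta L \geq -1$ from nonnegativity, the $p$-integrability from the lower bound $Y_{-} \geq 1/n$ on $[0,\zeta_n]$---whereas the paper's route is shorter because Kabanov's theorem packages all of this at once.

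One place where your write-up could be tightened: for the Hellinger-type bound you fall back on \cite{kabanov1979absolute} anyway, which partly undercuts the point of the explicit construction. A direct estimate is available and keeps your argument fully elementary: for $x \geq -1$ one has $(1-\sqrt{1+x})^2 = x^2/(1+\sqrt{1+x})^2 \leq x^2$, so on $[0,T\wedge\zeta_n]$
\[
\int_K (1-\sqrt{1+q(s,z)})^2\,v(dz) \;\leq\; \int_K q(s,z)^2\,v(dz) \;\leq\; n^2 \int_K G(s,z)^2\,v(dz),
\]
and the right-hand side is integrable by the square-integrability of $Y$. With this in place your proof stands on its own without invoking the Kabanov machinery that the paper relies on.
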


\begin{proof} 
	\par See Appendix B.
\end{proof}

\begin{remark}
	\par Let $\mathcal{P}$ be the predictable $\sigma$-algebra. Let $\widetilde{\mathcal{P}} = \mathcal{P} \otimes \mathcal{K}$, where $\mathcal{K}$ is the Borel $\sigma$-algebra of $K$. Let $M_N^P$ be the Dol{\'e}ans measure associated with the random measure $N$ under the probability measure $P$. Then $p(t)$ and $q(t,z)$ have the following representations
	\begin{equation}
	\label{mmvPC.b.representation} p(t) =  Y^{\oplus}(t) \frac{d \langle Y,W \rangle(t)}{d t}, \quad q(t,z) = Y^{\oplus}(t-) M_N^P(Y|\widetilde{\mathcal{P}})(t,z) - 1,
	\end{equation}
	where $Y^{\oplus}(t) = Y(t)^{-1}1_{\{Y(t)>0\}}$. Given a process $Y \in \mathcal{Y}^2$, it follows from Lemma 7 (2) of \cite{kabanov1979absolute} that SDE \eqref{mmvPC.sde.y.representation} with $p(t)$ and $q(t,z)$ defined by \eqref{mmvPC.b.representation} has a unique solution which equals $Y$ (indistinguishable).
\end{remark}

\par Denote $b(t,z) := (p(t), q(t,z))$ and $ \mathcal{B}[0,T] = \{ b(\cdot,\cdot) |$ for any $z \in K$, $ q(\cdot,z): [0,T] \to \mathbb{R}$ and $p : [0,T] \to \mathbb{R}$ are $\mathcal{F}_t$-predictable and $\mathcal{F}_t$-adapted, respectively, satisfying $\int_{K} q(t,z) N(\{t\},dz) \geq -1, \ \forall t \in [0,T]$, such that SDE \eqref{mmvPC.sde.y.representation} has a unique solution belonging to $\mathcal{Y}^2$ $\} $. The following corollary provides a theoretical basis for changing the control process from $Y $ to $p$ and $q$.

\begin{corollary} \label{mmvPC.corollary.1}
	\par Suppose that $Y  \in \mathcal{Y}^2$, then the process $b(t,z)$ defined by \eqref{mmvPC.b.representation} belongs to $\mathcal{B}[0,T]$ and $Y $ is the unique solution to  \eqref{mmvPC.sde.y.representation}. Conversely, if $b(t,z) \in \mathcal{B}[0,T]$, then the process $\{ Y(t): t \in [0,T] \}$ defined by \eqref{mmvPC.sde.y.representation} belongs to $\mathcal{Y}^2$.
\end{corollary}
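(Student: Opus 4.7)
The corollary consists of a forward implication and a converse; the converse is essentially tautological. By the definition of $\mathcal{B}[0,T]$ recalled just above the statement, any $b \in \mathcal{B}[0,T]$ is required to produce a unique solution of \eqref{mmvPC.sde.y.representation} lying in $\mathcal{Y}^2$, so the process $Y$ defined by the SDE automatically belongs to $\mathcal{Y}^2$. The content therefore sits in the forward direction: given $Y \in \mathcal{Y}^2$, one must verify that the explicit integrands $p(t) = Y^{\oplus}(t)\, d\langle Y,W\rangle(t)/dt$ and $q(t,z) = Y^{\oplus}(t-) M_N^P(Y|\widetilde{\mathcal{P}})(t,z) - 1$ specified in \eqref{mmvPC.b.representation} genuinely satisfy all the requirements in the definition of $\mathcal{B}[0,T]$, and that the SDE driven by them returns $Y$ as its unique solution.

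My plan for the forward argument is three-fold. First, measurability: $\langle Y,W\rangle$ is a continuous finite-variation $\mathbb{F}$-adapted process, so its time-derivative (where it exists) is $\mathbb{F}$-adapted, and combined with the $\mathbb{F}$-adaptedness of $Y^{\oplus}(t) = Y(t)^{-1} 1_{\{Y(t) > 0\}}$, this shows $p$ is $\mathcal{F}_t$-adapted; the Dol\'{e}ans-measure conditional expectation $M_N^P(Y|\widetilde{\mathcal{P}})$ is $\widetilde{\mathcal{P}}$-measurable by construction, so $q(\cdot,z)$ is $\mathcal{F}_t$-predictable. Second, the jump inequality $\int_K q(t,z) N(\{t\},dz) \geq -1$: from \eqref{mmvPC.sde.y.representation} one has $\Delta Y(t) = Y(t-)\Delta L(t)$, and since $Y \geq 0$ this gives $\Delta L(t) \geq -1$ on $\{Y(t-) > 0\}$; on the complementary event a nonnegative martingale stays at zero after hitting zero, so $Y^{\oplus}(t-) = 0$ forces $q(t,z) \equiv -1$ and the inequality holds trivially. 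Third, existence, uniqueness, and identification: Theorem \ref{mmvPC.theorem.Qcharacterization} itself delivers that $Y$ solves \eqref{mmvPC.sde.y.representation} with this $(p,q)$, while the remark following it invokes Lemma 7(2) of \cite{kabanov1979absolute} to give uniqueness up to indistinguishability. Together these identify the unique $\mathcal{Y}^2$-solution as $Y$ and hence place $b$ in $\mathcal{B}[0,T]$.

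All the analytic work lives in Theorem \ref{mmvPC.theorem.Qcharacterization} (proved in Appendix B) and the Kabanov uniqueness lemma it relies on; once those are granted, the present corollary is essentially bookkeeping. The one technical subtlety I expect to be the main obstacle is the reconciliation of conventions on the event $\{Y = 0\}$: one must confirm that $Y^{\oplus}(t-) M_N^P(Y|\widetilde{\mathcal{P}})(t,z) - 1 = -1$ pointwise there, so that the jump constraint is preserved while the SDE continues to produce the \emph{same} $Y$ after the hitting time of zero. This depends on the nonnegative-martingale absorption property at zero, which is implicit in $\mathcal{Y}^2$ but must be cited explicitly to close the argument.
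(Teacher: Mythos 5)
Your proposal is correct and follows the same route as the paper, which simply observes that the corollary is an immediate consequence of Theorem \ref{mmvPC.theorem.Qcharacterization} (together with the uniqueness statement from Lemma 7(2) of \cite{kabanov1979absolute} recorded in the remark preceding the corollary). You merely spell out the bookkeeping (measurability, the jump constraint, absorption at zero) that the paper leaves implicit in the word ``immediately.''
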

\begin{proof}
	\par This corollary follows immediately from Theorem \ref{mmvPC.theorem.Qcharacterization}.
\end{proof}

\par The following proposition provides the equivalence of the solutions of Problem {\bf (D)} and Problem {\bf (P$_{sxy}$)}.

\begin{proposition} \label{mmvPC.proposition.3}
	\par If $(a^*(\cdot;0,x_0,1),b^*(\cdot,\cdot;0,x_0,1))$ is a Nash equilibrium of Problem {\bf (P$_{0x_01}$)}, then $(a^*(\cdot;0,x_0,1),Y^*(\cdot;0,x_0,1))$ is a Nash equilibrium of Problem {\bf (D)} where $Y^*(\cdot;0,x_0,1)$ is the solution to \eqref{mmvPC.dynamic.y} with $b(\cdot,\cdot) = b^*(\cdot,\cdot;0,x_0,1)$. Conversely if $(a^*(\cdot;0,x_0,1),Y^*(\cdot;0,x_0,1))$ is a Nash equilibrium of Problem {\bf (D)}, then $(a^*(\cdot;0,x_0,1),b^*(\cdot,\cdot;0,x_0,1))$ is a Nash equilibrium of Problem {\bf (P$_{0x_01}$)} where $b^*(\cdot,\cdot;0,x_0,1)$ is defined via \eqref{mmvPC.b.representation}.
\end{proposition}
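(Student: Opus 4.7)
The plan is to exploit the one-to-one correspondence between $\mathcal{Y}^2(P)$ and $\mathcal{B}[0,T]$ provided by Corollary \ref{mmvPC.corollary.1}: every admissible $b=(p,q)\in\mathcal{B}[0,T]$ produces, via SDE \eqref{mmvPC.sde.y.representation}, a unique $Y\in\mathcal{Y}^2(P)$, and conversely every $Y\in\mathcal{Y}^2(P)$ arises in this way with $b$ reconstructed through \eqref{mmvPC.b.representation}. Under this correspondence the payoff functionals match pointwise:
\begin{equation*}
J^{a,b}(0,x_0,1) = \mathbb{E}_{0,x_0,1}^P\!\left[X^a(T)Y^b(T)+\tfrac{1}{2\theta}(Y^b(T))^2\right] = J_\theta(a,Y),
\end{equation*}
whenever $Y$ is the solution to \eqref{mmvPC.dynamic.y} driven by $b$. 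So the whole proposition reduces to transporting the two Nash inequalities through this identification.

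For the forward direction, assume $(a^*,b^*)$ is a Nash equilibrium of Problem {\bf (P$_{0x_01}$)} and let $Y^*$ denote the solution of \eqref{mmvPC.dynamic.y} with $b=b^*$; by Corollary \ref{mmvPC.corollary.1}, $Y^*\in\mathcal{Y}^2(P)$. The inequality $J^{a,b^*}(0,x_0,1)\le J^{a^*,b^*}(0,x_0,1)$ immediately becomes $J_\theta(a,Y^*)\le J_\theta(a^*,Y^*)$ for all $a\in\mathcal{A}[0,T]$. For the other side, given an arbitrary $Y\in\mathcal{Y}^2(P)$ I would define the associated $b\in\mathcal{B}[0,T]$ via \eqref{mmvPC.b.representation}; since $Y$ then coincides (indistinguishably) with $Y^b$, the inequality $J^{a^*,b^*}(0,x_0,1)\le J^{a^*,b}(0,x_0,1)$ reads $J_\theta(a^*,Y^*)\le J_\theta(a^*,Y)$. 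This is exactly the Nash equilibrium condition for Problem {\bf (D)}.

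The converse direction proceeds symmetrically. Given a Nash equilibrium $(a^*,Y^*)$ of Problem {\bf (D)}, I would define $b^*:=(p^*,q^*)$ through \eqref{mmvPC.b.representation} with $Y=Y^*$; by Corollary \ref{mmvPC.corollary.1}, $b^*\in\mathcal{B}[0,T]$ and $Y^*$ is the unique solution of \eqref{mmvPC.sde.y.representation} driven by $b^*$, so the payoff identification above is valid at $(a^*,b^*)$. For any competing $b\in\mathcal{B}[0,T]$ the associated $Y\in\mathcal{Y}^2(P)$ is eligible in Problem {\bf (D)}, and similarly any $a\in\mathcal{A}[0,T]$ is eligible in both problems; translating the two Nash inequalities for $(a^*,Y^*)$ back through the correspondence yields the Nash inequalities for $(a^*,b^*)$.

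The only nontrivial point here is the exchangeability of the two admissibility classes, and that work has already been carried out in Theorem \ref{mmvPC.theorem.Qcharacterization} and Corollary \ref{mmvPC.corollary.1}; conditional on those results the proposition is essentially a bookkeeping exercise, so I expect the proof to be very short, with the author perhaps even citing Corollary \ref{mmvPC.corollary.1} and concluding.
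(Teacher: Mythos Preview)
Your proposal is correct and matches the paper's approach: the paper in fact provides no explicit proof of this proposition, treating it as an immediate consequence of the correspondence established in Corollary \ref{mmvPC.corollary.1}, which is precisely what you anticipated.
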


\section{Proof of Theorem \ref{mmvPC.theorem.Qcharacterization}}

\par We first give some lemmas before proving Theorem \ref{mmvPC.theorem.Qcharacterization}. 

\par Note that $Y$ may hit zero at finite time, we need to consider the behavior of $Y$ after that. Define 
\begin{equation*}
\zeta = \inf\{ t \geq 0; Y(t) \leq 0\},
\end{equation*}
with the convention that $\inf\emptyset=+\infty$. Then $Y(\zeta)=0$, $P$-a.s. on the set $\{\zeta \leq T\}$. The following lemma shows that $Y$ would stay there once it hit zero.

\begin{lemma} \label{mmvPC.lemma.Y1}
	\par On the set $\{\zeta \leq T\}$, it holds almost surely under probability $P$ that
	\begin{equation*}
	Y(\zeta+t) = 0, \quad t \geq 0.
	\end{equation*}
\end{lemma}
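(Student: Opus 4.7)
The plan is to exploit the fact that $Y$ is a nonnegative RCLL $P$-martingale: a nonnegative martingale that reaches zero at a finite time must remain at zero afterwards. I would combine the optional sampling theorem applied at two bounded stopping times with a right-continuity argument to upgrade an a.s.-for-each-$s$ statement into an a.s.-for-all-$s$ statement.

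First, I would verify that $Y(\zeta)=0$ $P$-a.s.\ on $\{\zeta\le T\}$. Since $Y$ is RCLL and nonnegative, $Y(\zeta)=Y(\zeta+)\ge 0$; on the other hand, on $\{\zeta\le T\}$, either $Y(\zeta-)=0$ and then by right-continuity combined with the definition of $\zeta$ we get $Y(\zeta)=0$, or $Y$ has a (possibly negative) jump at $\zeta$, which together with nonnegativity again forces $Y(\zeta)=0$. Next, for an arbitrary but fixed $s\ge 0$, I would apply the optional sampling theorem to the bounded stopping times $\sigma:=\zeta\wedge T\le \tau:=(\zeta+s)\wedge T$ to get
\begin{equation*}
\mathbb{E}^P\bigl[Y(\tau)\bigm|\mathcal{F}_{\sigma}\bigr]=Y(\sigma).
\end{equation*}
On $\{\zeta\le T\}$ one has $\sigma=\zeta$ and $Y(\sigma)=0$, so taking expectation against $\mathbf{1}_{\{\zeta\le T\}}$ yields
\begin{equation*}
\mathbb{E}^P\bigl[Y\bigl((\zeta+s)\wedge T\bigr)\mathbf{1}_{\{\zeta\le T\}}\bigr]=0,
\end{equation*}
and since the integrand is nonnegative, $Y((\zeta+s)\wedge T)=0$ $P$-a.s.\ on $\{\zeta\le T\}$.

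Finally, I would pass from a fixed $s$ to all $s\ge 0$. Running the above argument over the countable set $s\in\mathbb{Q}_+$ and taking a countable intersection of the corresponding full-measure events, I obtain a single $P$-null set outside of which $Y((\zeta+s)\wedge T)=0$ on $\{\zeta\le T\}$ for every rational $s\ge 0$. Right-continuity of the sample paths of $Y$ then extends this identity to every real $s\ge 0$, which is exactly the claim $Y(\zeta+t)=0$ for all $t\ge 0$ with $\zeta+t\le T$.

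The main (minor) obstacle is the bookkeeping around the stopping time $\zeta$: one must make sure the optional sampling theorem is applicable (which is fine because the filtration $\mathbb{F}$ is right-continuous and both stopping times are bounded by $T$) and that the value $Y(\zeta)$ is indeed zero rather than merely non-positive, which is where the combination of nonnegativity and RCLL paths is used. After that, the density-plus-right-continuity step is standard.
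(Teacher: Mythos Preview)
Your proposal is correct and follows essentially the same route as the paper: apply optional sampling at the stopping times $\zeta$ and $\zeta+t$ (you truncate by $T$, which is slightly more careful), use nonnegativity to turn a zero expectation into an almost-sure zero for each fixed $t$, then pass to all $t$ via rationals and right-continuity. The paper states $Y(\zeta)=0$ on $\{\zeta\le T\}$ just before the lemma rather than proving it inside, but otherwise the arguments coincide.
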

\begin{proof}
	Given a fixed $t \geq 0$, note that $Y$ is a nonnegative martingale, then by applying the optional stopping theorem, we have
	\begin{align*}
	0 \leq \mathbb{E}^P\left[Y(\zeta+t)1_{\{\zeta \leq T\}}\right] =& \mathbb{E}^P\left[\mathbb{E}^P\left[Y(\zeta+t)|\mathcal{F}_{\zeta}\right]1_{\{\zeta \leq T\}}\right] \\
	\leq& \mathbb{E}^P\left[Y(\zeta)1_{\{\zeta \leq T\}}\right] = 0.
	\end{align*}
	Since $Y(\zeta+t)1_{\{\zeta \leq T\}}$ is nonnegative $P$-a.s., we obtain $Y(\zeta+t)1_{\{\zeta \leq T\}} = 0$ $P$-a.s.. Let $N_t$ denote the set such that $Y(\zeta+t)1_{\{\zeta \leq T\}} \neq 0$, then $P(N_t)=0$. Let $N := \cup_{t \in \mathbb{Q}_+} N_t$, where $\mathbb{Q}_+$ is the set of all positive rational number. Then, for all $\omega \in \Omega \setminus N$, $Y(\zeta+t)1_{\{\zeta \leq T\}} = 0$ holds for all $t \in Q_+$. By the right-continuity of $Y$, we have that $Y(\zeta+t)1_{\{\zeta \leq T\}} = 0$ holds for all $t \in R_+$ $P$-a.s..
\end{proof}

\begin{proofth3.1}
Let $Y^{\zeta}  := Y(\cdot \wedge \zeta)$. By (2), (3) of Theorem 13 and (1) of Lemma 7 of \cite{kabanov1979absolute}, $Y$ admits the representation
 \begin{equation}
	\label{mmvPC.sde.1} Y^{\zeta}(t) = 1 + \int_0^t Y^{\zeta}(s-) d L(s), \quad \forall t \in [0,T],
\end{equation}
where $L(t) := \int_0^t p(s)dW(s) + \int_0^t \int_{K} q(s,z) \widetilde{N}(ds,dz)$ with $\Delta L(t) \geq -1$, $t \in [0,T]$ satisfying
\begin{equation*}
	E \bigg[ \int_0^{T \wedge \zeta_n} p(t)^2 dt + \int_0^{T \wedge \zeta_n} \int_{K} (1 - \sqrt{q(t,z)+1})^2 v(dz) dt \bigg] < \infty,
\end{equation*}
for any $\zeta_n$, where 
\begin{equation*}
\zeta_n := \inf\left\{ t \geq 0; Y(t) \leq \frac{1}{n}\right\}.
\end{equation*}

For the case that $\{ \zeta > T\}$, we can simply remove the superscript in \eqref{mmvPC.sde.1}, which gives \eqref{mmvPC.sde.y.representation}. For the case that $\{ \zeta \leq T\}$, if $t \leq \zeta$, then $Y^{\zeta}(t) = Y(t \wedge \zeta) = Y(t)$; if $t > \zeta$, then $Y^{\zeta}(t) = Y(\zeta) = 0 = Y(t)$, by using Lemma \ref{mmvPC.lemma.Y1}. Therefore, on $\{ \zeta \leq T\}$, we have $Y^{\zeta}(t) = Y(t)$ and \eqref{mmvPC.sde.y.representation} holds.
\end{proofth3.1}

\par
\par

\end{appendices}

\noindent {\\
\bf Acknowledgment} This work was supported by the National Natural Science Foundation of China 11931018, the Tianjin Natural Science Foundation 19JCYBJC30400, the National Natural Science Foundation of China 12201104 and the Fundamental Research Funds for the Central Universities 2232021D-29.


\end{document}